\newcommand{\RR}{{\mathbb R}}
\newcommand{\NN}{{\mathbb N}}
\newcommand{\CC}{{\mathbb C}}
\newcommand{\CI}{{C^\infty}}
\newcommand{\TT}{{\mathbb T}}
\newcommand{\e}{\epsilon}
\definecolor{light-gray}{rgb}{.5,.5,.5}
\title{Analytic hypoellipticity of Keldysh operators}
\author{Jeffrey Galkowski}
\email{j.galkowski@ucl.ac.uk}
\address{Department of Mathematics, University College London, WC1H 0AY, UK}
\author{Maciej Zworski} 
\email{zworski@math.berkeley.edu}
\address{Department of Mathematics, University of California, Berkeley, CA 94720}
\begin{document}

\begin{abstract}
We consider Keldysh-type operators, $ P = x_1 D_{x_1}^2 + a (x) D_{x_1} +  Q (x, D_{x'} ) $, $ x = ( x_1, x') $ with analytic coefficients, 
and with $ Q ( x, D_{x'} ) $ second order, principally real  and elliptic in $ D_{x'} $ for 
$ x $ near zero.
We show that if $ P u =f $, $ u \in C^\infty $, and $ f $ is {\em analytic} in a neighbourhood of $ 0 $ then $ u $ is {\em analytic} in a neighbourhood of 
$ 0 $. This is a consequence of a microlocal result valid for operators of any order with Lagrangian radial sets. Our result proves a generalized version of a conjecture made in \cite{revres}, \cite{LZ} and has applications to scattering theory.
\end{abstract}

\maketitle

\section{Introduction}
\label{s:intr}

We consider analytic regularity for generalizations of the Keldysh operator \cite{Keld}, 
\begin{equation}
\label{eq:Keld0} P :=  x_1 D_{x_1}^2 + D_{x_2}^2  .
\end{equation}
The operator $ P $ has the feature of changing
from an elliptic to a hyperbolic operator at $ x_1 = 0 $. It appears in various places including the study of transsonic flows, see for instance \v{C}ani\'c--Keyfitz 
\cite{cake} or population biology -- see Epstein--Mazzeo 
\cite{epm}. Our interest in such operators comes from the work of 
Vasy \cite{vas} where the transition at $ x_1 = 0 $ corresponds to the
boundary at infinity for asymptotically hyperbolic manifolds (see \cite{V4D}), crossing the event horizons of Schwartzschild black holes (see 
\cite[\S 5.7]{dizzy}) or the cosmological horizon for de Sitter spaces.
The Vasy operator in the asymptotically hyperbolic setting is given by 
\begin{equation}
\label{eq:Plag} P ( \lambda ) = 4 ( x_1 D_{x_1}^2 - ( \lambda + i ) D_{x_1} )
- \Delta_{h(x_1)}  + i \gamma ( x ) \left( 2 x_1 D_{x_1} - \lambda - i
{\textstyle\frac{ n-1} 2 } \right) , \end{equation}
where $ h(x_1 ) $ is a smooth family of Riemannian metrics in $ x' $, 
$ x = ( x_1, x') \in \RR^n $ and $ \gamma \in \CI ( \RR^n ) $. The resonant states at 
resonant frequencies $ \lambda $ 
(see \cite[Chapter 5]{dizzy}) are the smooth 
solutions of $ P ( \lambda ) u = 0 $. 

For various reasons reviewed in 
\S \ref{s:soma} it is interesting to ask if in the case of analytic coefficients the resonant states are real analytic across $ x_1 = 0 $.
That lead to \cite[Conjecture 2]{revres} which asked if $ P ( \lambda ) 
u = f $ with $ u $ smooth and $ f $ analytic near $ x_1 = 0 $ implies that
$ u $ is analytic near $ x_1 = 0 $. For $ \gamma ( x ) \equiv 0 $
and $ h $ independent of $ x_1 $, this was shown by Lebeau--Zworski
\cite{LZ} under the assumption that $ \lambda \notin - i \NN^* $. 

The general case was proved by Zuily \cite{zulu} under the same 
restriction on $ \lambda $. His proof was an elegant adaptation of
 the work of
Baouendi--Goulaouic \cite{BG}, Bolley--Camus \cite{bc} and
Bolley--Camus--Hanouzet \cite{bcz}. 

In this paper we prove this result for generalized Keldysh operators
with analytic coefficients \eqref{eq:Keld}. In particular, we do not make any assumptions on lower order terms:
\begin{theo}
\label{t:1}
Suppose that $ U\subset \RR^n  $ is a neighbourhood of $ 0 $,
\begin{equation}
\label{eq:Keld}   P := x_1 D_{x_1}^2 + a (x) D_{x_1} +  Q (x, D_{x'} ) , \ \ 
\  x = ( x_1, x') \in U ,\end{equation}
 has analytic coefficients, 
$ Q ( x, D_{x'} ) $ is a second order elliptic operator 
in $ D_{x'} $ with a real valued principal symbol. Then there exists
a neighbourhood of $ 0 $, $ U' \subset U $, such that
\begin{equation}
\label{eq:t1}   P u \in C^\omega ( U ) , \ \  u \in C^\infty (U) \ \Longrightarrow \ 
u \in C^\omega ( U' ) . \end{equation} \end{theo}
We will show in \S \ref{s:micro} that this result follows from a more general microlocal result valid for operators of all orders satisfying a natural geometric condition.

\noindent
{\bf Remarks:} 1. In the statement of the theorem $ 0 $ can be replaced by any point at which $ x_1 \geq 0 $ and $ U' $ can be replaced by $ U$ provided we include a bicharacteristic convexity condition. That follows from propagation of analytic singularities -- see 
\cite[Theorem 4.3.7]{M} or \cite[Theorem 2.9.1]{his}: since there are no singularities near $ x_1 = 0 $ there will be no singularities on 
trajectories hitting $ x_1 = 0 $ -- see Figure~\ref{f:radial1}.

\begin{figure}
\begin{tikzpicture}
\begin{scope}[scale=2]
\begin{scope}[shift={(-4,0)}]
\coordinate (C) at (1,0);
\tikzmath{ \m=4;\aShift=1/2; \maxx=1/3;\N=8;}
\draw (-1.5,1)--(2,1);
\draw (-1.5,-1)--(2,-1);
\begin{scope}[shift=(C)]

\foreach \i in {1,...,\N} {
\draw[->,color=light-gray,dashed,thick]   plot[smooth,domain=\maxx*(((\i-1)/\N)*sqrt((\i-1)/\N)-.05):\maxx*((\i/\N)*sqrt(\i/\N))] ( {-\m*tan (\x*90)* tan(\x*90) +cos ((\x+\aShift)*90)/2},{sin ((\x+\aShift)*90)});
\draw[->,color=light-gray,dashed,thick]   plot[smooth,domain=-\maxx*(((\i-1)/\N)*sqrt((\i-1)/\N)-.05):-\maxx*((\i/\N)*sqrt(\i/\N))] ( {-\m*tan (\x*90)* tan(\x*90) +cos ((\x+\aShift)*90)/2},{sin ((\x+\aShift)*90)});
}

\draw[color=light-gray,dashed]   plot[smooth,domain=-1:1] ( {cos ((\x)*90)/2},{sin ((\x)*90)});
\draw[color=light-gray,dashed]   plot[smooth,domain=-1:1] ({-1.8+cos ((\x)*90)/2},{sin ((\x)*90)});

\draw[black]   plot[smooth,domain=1:3] ( {cos ((\x)*90)/2},{sin ((\x)*90)});

\draw[black]   plot[smooth,domain=1:3] ( {-1.8+cos ((\x)*90)/2},{sin ((\x)*90)});

\foreach \i in {1,...,\N} {
\draw[black,->,thick]   plot[smooth,domain=\maxx*((\i/\N)*sqrt(\i/\N)+.05):\maxx*((\i-1)/\N)*sqrt((\i-1)/\N)] ( {-\m*tan (\x*90)* tan(\x*90)+cos ((\x+2+\aShift)*90)/2},{sin ((\x+2+\aShift)*90)});
\draw[black,->,thick]   plot[smooth,domain=-\maxx*((\i/\N)*sqrt(\i/\N)+.05):-\maxx*((\i-1)/\N)*sqrt((\i-1)/\N)] ( {-\m*tan (\x*90)* tan(\x*90)+cos ((\x+2+\aShift)*90)/2},{sin ((\x+2+\aShift)*90)});
}

\draw[shift=(C)] node[below] at (0,-1.2){\tiny{$x_1=0$}};
\draw[shift=(C)] node[below] at (-2,-1.2){\tiny{$x_1<0$}};
\draw[shift=(C)] node[below] at (-1,-1.5){{Keldysh}};

\draw node[circle,fill, inner sep =2pt, label= right:{\tiny{$\theta=0$}}] at ({cos ( (0+\aShift) *90)/2} , {sin((0+\aShift)*90)}){};
\draw node[circle,fill, inner sep =1pt, label= right:{\tiny{$\theta=\tfrac{ \pi}{2}$}}] at ({cos ( (1+\aShift) *90)/2} , {sin((1+\aShift)*90)}){};
\draw node[circle,fill, inner sep =2pt, label= right:{\tiny{$\theta= \pi$}}] at ({cos ( (2+\aShift) *90)/2} , {sin((2+\aShift)*90)}){};
\draw node[circle,fill, inner sep =1pt, label= right:{\tiny{$\theta=\tfrac{3 \pi}{2}$}}] at ({cos ( (3+\aShift) *90)/2} , {sin((3+\aShift)*90)}){};
\end{scope}
\end{scope}

\end{scope}
\begin{scope}[scale=2]
\coordinate (C) at (1,0);
\tikzmath{ \m=4;\aShift=1/2; \maxx=1/3; \N=3;}
\draw (-1.5,1)--(2,1);
\draw (-1.5,-1)--(2,-1);
\begin{scope}[shift=(C)]

\draw[color=light-gray,dashed]   plot[smooth,domain=-1:1] ( {cos ((\x)*90)/2},{sin ((\x)*90)});
\draw[color=light-gray,dashed]   plot[smooth,domain=-1:1] ({-1.8+cos ((\x)*90)/2},{sin ((\x)*90)});

\foreach \i in {1,...,\N} {
\draw[->,color=light-gray,dashed,thick]   plot[smooth,domain=\maxx*(sqrt((\i-1)/\N)-.05):\maxx*sqrt(\i/\N)]  ( {-\m*tan (\x*90)* tan(\x*90) +cos ((\x+3+\aShift)*90)/2},{sin ((\x+3+\aShift)*90)});
\draw[->,color=light-gray,dashed,thick]   plot[smooth,domain=\maxx*(-sqrt(\i/\N)-.05):\maxx*(-sqrt((\i-1)/\N))]  ( {-\m*tan (\x*90)* tan(\x*90) +cos ((\x+3+\aShift)*90)/2},{sin ((\x+3+\aShift)*90)});
}

\foreach \i in {1,...,\N}{
\draw[black,->,thick]   plot[smooth,domain=\maxx*(sqrt((\i-1)/\N)-.05):\maxx*sqrt(\i/\N)]  ( {-\m*tan (\x*90)* tan(\x*90) +cos ((\x+1+\aShift)*90)/2},{sin ((\x+1+\aShift)*90)});
\draw[black,thick,->]    plot[smooth,domain=\maxx*(-sqrt(\i/\N)-.05):\maxx*(-sqrt((\i-1)/\N))]   ( {-\m*tan (\x*90)* tan(\x*90) +cos ((\x+1+\aShift)*90)/2},{sin ((\x+1+\aShift)*90)});
}

\draw[black]   plot[smooth,domain=1:3] ( {cos ((\x)*90)/2},{sin ((\x)*90)});
\draw[black]   plot[smooth,domain=1:3] ( {-1.8+cos ((\x)*90)/2},{sin ((\x)*90)});

\draw[shift=(C)] node[below] at (0,-1.2){\tiny{$x_1=0$}};
\draw[shift=(C)] node[below] at (-2,-1.2){\tiny{$x_1<0$}};
\draw[shift=(C)] node[below] at (-1,-1.5){{Tricomi}};
\draw node[circle,fill, inner sep =1pt, label= right:{\tiny{$\theta=0$}}] at ({cos ( (0+\aShift) *90)/2} , {sin((0+\aShift)*90)}){};
\draw node[circle,fill, inner sep =1pt, label= right:{\tiny{$\theta=\tfrac{ \pi}{2}$}}] at ({cos ( (1+\aShift) *90)/2} , {sin((1+\aShift)*90)}){};
\draw node[circle,fill, inner sep =1pt, label= right:{\tiny{$\theta= \pi$}}] at ({cos ( (2+\aShift) *90)/2} , {sin((2+\aShift)*90)}){};
\draw node[circle,fill, inner sep =1pt, label= right:{\tiny{$\theta=\tfrac{3 \pi}{2}$}}] at ({cos ( (3+\aShift) *90)/2} , {sin((3+\aShift)*90)}){};
\end{scope}

\end{scope}
\end{tikzpicture}

\caption{
A comparison of the Keldysh operator \eqref{eq:Keld0} and the Tricomi 
operator \eqref{eq:tric}. The figures show the 
cylinder $ \RR_{x_1} \times \mathbb S^1_\theta $ where
$ ( \xi_1 , \xi_2 ) = |\xi| ( \cos \theta, \sin \theta ) $ (this is the 
boundary of the fiber compactified cotangent bundle $ \overline{ T}^* \RR^n $
-- see \cite[\S E.1.3]{dizzy} -- with the $ x_2 $ variable omitted).
The characteristic varieties, $ x_1 \cos^2 \theta + \sin^2 \theta = 0 $
and $ \cos^2 \theta + x_1 \sin^2 \theta = 0 $, respectively, are shown
with the direction of the Hamiltonian flow indicated. 
In the the Keldysh case,
the two radial Lagrangians, $ \Lambda_\pm $, correspond to $ \theta = \pi $ and $ \theta = 0 $ respectively.
} \label{f:radial1}
\end{figure}
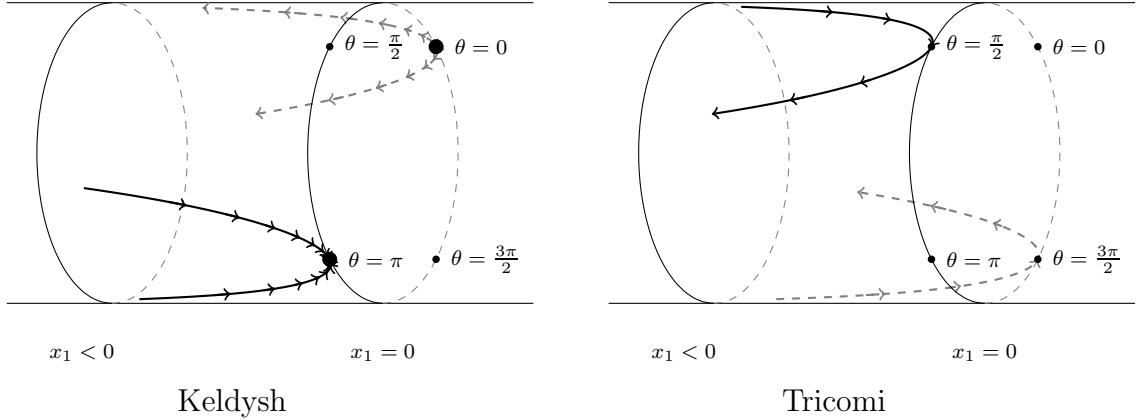

\noindent
2. The result is false for the Tricomi operator 
\begin{equation} 
\label{eq:tric} P := D_{x_1}^2 + x_1 D_{x_2}^2 . \end{equation}
This can be seen using results about propagation of analytic singularities (unlike \eqref{eq:Keld} this operator can be microlocally conjugated to 
$ D_{y_1 }$ -- see Figure \ref{f:radial1}) 
but is also easily demonstrated by the following example: 
\begin{equation}
\label{eq:Airy}
 u ( x ) := \int_{0}^\infty Ai ( \tau^{4/3 } x_1 ) e^{ i \tau^2 x_2} 
e^{ - \tau } d \tau ,  \ \ P u =  0 , \ \ u \in \CI ( \RR^2 ) .
\end{equation}
Here, $ Ai $ is the Airy function 
which satisfies
\begin{gather*}  Ai'' ( t ) + t Ai ( t ) = 0 , \ \ 
| \partial^{\ell}_t Ai ( t ) | \leq C_\ell \langle t \rangle^{\frac{\ell}{2}-\frac14} , \ \ t \in \RR, \ \ell \in \mathbb N, \ \ Ai ( 0 ) > 0 .
\end{gather*}
We then have
\[  D_{x_2}^k u ( 0) = Ai (0) \int_0^\infty \tau^{ 2k } e^{ - \tau } d\tau =  Ai ( 0 ) (2k)! 
\]
and $ u $ is not analytic at $ 0 $. 

\noindent
3. Results similar to \eqref{eq:t1} have been obtained in the setting of other operators. In addition to the works \cite{bc},\cite{bcz} cited
above, we mention the work of Baouendi--Sj\"ostrand \cite{BaS} who considered a class
of Fuchsian operators generalizing 
\begin{equation}
\label{eq:Fuch}   P = |x|^2 \Delta + \mu \langle x ,D_x \rangle + \lambda 
\end{equation}
In the case of \eqref{eq:Fuch},
\eqref{eq:t1} holds for any $ \lambda, \mu \in \CC $ and 
\cite{BaS} established \eqref{eq:t1} for more general operators satisfying
appropriate conditions.

\noindent
4. The operators \eqref{eq:Keld}, \eqref{eq:tric} and \eqref{eq:Fuch} 
are not $ \CI $ hypoelliptic, that is, $ P u \in \CI \not \Rightarrow 
u \in \CI $. The study of operators which are $ \CI $ hypoelliptic 
but not analytic hypoelliptic has a long tradition 
with a simple example \cite[\S 8.6, Example 2]{H1} given by  
\[ P = D_{x_1}^2 + x_1^2 D_{x_2}^2 + D_{x_3}^3 . \]
For more complicated cases, references, and connections to several complex variables, see Christ \cite{christ} and for some recent progress and additional references, 
Bove--Mughetti \cite{bomu}.

\subsection{A microlocal result}
\label{s:micro}

We make the following general assumptions. Let $ P$ be a differential operator of order $ m  $ with analytic coefficients:
\begin{equation}
\label{eq:defP}  P := \sum_{ |\alpha| \leq m } a_\alpha ( x ) D_x^\alpha, \ \ 
a_\alpha \in C^\omega ( U ) , \ \ p ( x , \xi ) := 
\sum_{ |\alpha | = m } a_\alpha ( x ) \xi^\alpha, \end{equation}
where $ U $ is an open neighbourhood of $ x_0 \in \RR^n $. We make the following assumptions valid in a conic neighbourhood of 
$ (x_0, \xi_0 )  \in T^* \RR^n \setminus 0 $: $ p $ is {\em real valued} and 
there exists a conic Lagrangian submanifold $ \Lambda $, such that
\begin{equation}
\label{eq:hypP} 
\begin{gathered}
( x_0 , \xi_0 ) \in \Lambda \subset p^{-1} ( 0 ) , \ \ \ 
dp|_{\Lambda } \neq 0 , \ \ \ H_p |_{\Lambda } \parallel \xi \cdot \partial_\xi |_{\Lambda } . 
\end{gathered}
\end{equation}
Here $ \parallel $ means that the two vector fields are {\em positively} proportional, 
that is the Lagrangian is {\em radial} (the positivity assumptions can be achieved by multiplying $ P $ by $ \pm 1 $). 
Except for the analyticity assumption in \eqref{eq:defP} these are the assumptions made in Haber \cite{hab} and Haber--Vasy \cite{hv}.

Theorem \ref{t:1} follows from the following microlocal result.
We denote by $ \WF $ the $ \CI $-wave front set 
and by $ \WFa $ the analytic wave front set -- 
see \cite[\S 8.1]{H1} and \cite[\S 8.5,9.3]{H1}, respectively.

\begin{theo}
\label{t:2}
Suppose that $ P $ and $ ( x_0 , \xi_0 ) \in T^* \RR^n \setminus 0 $
satisfy  the assumptions \eqref{eq:defP} and
\eqref{eq:hypP}. Then for $u\in \mathscr{D}'(\mathbb{R}^n)$,
\begin{equation}
\label{eq:t2}
 (x_0,\xi_0)\notin \WF(u) , \ \ (  x_0 , \xi_0 ) \notin 
\WFa ( Pu ) 
\ \Longrightarrow \ ( x_0 , \xi_0 ) \notin \WFa ( u ) . 
\end{equation}
\end{theo}


The proof is based on the theory of 
microlocal symbolic weights developed by 
Galkowski--Zworski \cite{gaz1} and based on the work 
of Sj\"ostrand -- see \cite[\S 2]{Sj96}
(and also \cite{HS} and \cite[\S 3.5]{M}).
With this theory in place we can use escape functions, $ G $, 
$ H_p G \geq 0 $, 
which are logarithmically bounded in $ \xi $ (hence the $ \CI $  wave front 
set  assumption on 
$ u $ allows the use of such weights) and which tend to $ \langle \xi \rangle$ in a neighbourhood of $ ( x_0 , \xi_0 ) $.
The normal form for $ p $ constructed in \cite{hab} (following much
earlier work of Guillemin--Schaeffer \cite{gus} which was based in turn on 
Sternberg's linearization theorem \cite{stern}) was helpful 
in the construction of the specific weights needed here. We indicate 
the method of the proof in \S \ref{s:idea}.

\begin{proof}[Proof of Theorem \ref{t:1}]
Under the assumptions of Theorem \ref{t:1} the characteristic set 
of $ P $ over $ x_1 = 0 $ is given by (in $ T^* \RR^n \setminus 0 $)
\[ p^{-1} ( 0 ) \cap \{ x_1 = 0 \} = \{ ( 0 , x_2 , \xi_1 , 0 ) :
 \xi_1 \in \RR \setminus 0 ; x_2 \in {\rm{neigh}}_{\RR^{n-1}} ( 0) \} = 
 \Lambda_+ \sqcup \Lambda_- , \]
 where $ \pm \xi_1>0 $ on $ \Lambda_\pm $. These two components are 
 Lagrangian and conic and $ H_p |_{\Lambda_\pm } = 
- \xi_1^2 \partial_{\xi_1} |_{\Lambda_\pm } $ is radial. 
Since  $ 
Pu \in C^\omega ( U ) $ we have $ \WFa ( P u ) \cap \{ x\in U : x_1 = 0 \} = \emptyset $ and hence Theorem \ref{t:2} shows that 
$ \WFa ( u ) \cap \Lambda_\pm = \emptyset $. On the other hand,
(\cite[Theorem 8.6.1]{H1}), $ \WFa ( u ) \cap \{ x_1 = 0 \} 
 \subset p^{-1} ( 0 ) \cap \{ x_1 = 0 \} = \Lambda_+ \sqcup \Lambda_- $.
Hence $ \WFa ( u ) \cap \{ x_1 = 0 \} = \emptyset$ and, since
$ {\rm{singsupp}_a} \, u = \pi \WFa ( u ) $, $ u $ is analytic near 
$ x_1 = 0 $.
\end{proof} 

\subsection{A proof in a special case}
\label{s:idea}
To indicate the ideas behind the proof we consider $ P $ given by 
\[  P = x_1 D_{x_1}^2 + D_{x_2}^2 + a D_{x_1} ,  \ \  a \in \CC , \]
 and a very special $ u $:
\begin{equation}
\label{eq:spass} 
 u = e^{i \tau x_2 } v ( x_1 ) , \ \ 
v \in \mathscr S ( \RR ) , \ \ 
Pu  = e^{ i \tau x_2} f ( x_1),    \ \ e^{  | \xi_1 | }
\widehat f \in L^2 ( \RR ) . 
\end{equation}
This assumption is a stronger version of the assumption that $ f $ is 
analytic. We consider a family of smooth functions $ G_\epsilon ( \xi_1 )  $ satisfying
\begin{equation}
\label{eq:Geps0}  0 \leq G_\epsilon ( \xi_1 ) \leq 
\min ( \tfrac 1 \epsilon \log ( 1 + | \xi_1| ) , | \xi_1 | ) \end{equation}
In view of 
\eqref{eq:spass},  
\[ \|  v_{\e}  \|_{ L^2 ( \RR ) } \leq 
C_\epsilon , \ \ \|   f_{\e}  \|_{ L^2 ( \RR ) } \leq C_0 \, 
\ \ v_{\e} := e^{ G_\epsilon  ( D_x ) } v, \ \
f_{\e} := e^{  G_\epsilon  ( D_x ) } f.  
\]
where $ C_0 $ is independent of $ \epsilon $. We then consider
\[ P_{\e} := e^{ G_\epsilon ( D_x ) } ( x_1 D_{x_1}^2 + a D_{x_1} + \tau^2 )
e^{ -  G_\epsilon ( D_x ) } = 
x_1 D_{x_1}^2 + i  G'_\epsilon ( D_{x_1} )D_{x_1}^2  + a D_{x_1} 
+ \tau^2  . \]
We have $ P_{\e} v_{\e} = f_{\e} $, 
and
\[ \begin{split}
 \Im \langle P_{\e} v_{\e} , v_{\e} \rangle_{ L^2 ( \RR ) }  & = 
 \langle   G'_{\epsilon}  ( D_{x_1} ) D_{x_1}^2 v_{\e} , v_{\e} \rangle_{L^2 ( \RR)}
+ \langle ( \Im a + 1 ) D_{x_1 } v_{\e} , v_{\e} \rangle_{L^2 ( \RR)}  \\
& =  \langle ( \xi_1^2 G'_\epsilon  ( \xi_1 ) + ( \Im a + 1 ) \xi_1 ) \widehat v_{\e} , \widehat 
v_{\e} \rangle_{ L^2 ( \RR_{\xi_1}  ) },
\end{split} 
\]
where we took $ d\xi_1/ ( 2 \pi ) $ as the measure on $ L^2 ( \RR_{\xi_1} ) $. 
Let $ \chi \in \CI ( \RR ; [0,1] ) $ satisfy $ \chi|_{ t \leq 1 } = 1 $,
$ \chi|_{ t \geq 2 } = 0 $ and $ \chi' \leq 0 $. We define 
\[ G_\epsilon ( \xi_1 ) = ( 1 - \chi( \xi_1 ) ) \int_0^{\xi_1 } 
( \chi ( \epsilon t ) + ( 1 - \chi ( \epsilon t )) ( \epsilon t )^{-1} )
dt , \]
which satisfies \eqref{eq:Geps0} and $ G'_\epsilon \geq 0 $. Moreover, for
$ \xi_1 \geq M \geq 2 $ and $ \epsilon < 1/M $, 
\[  \xi_1^2  G_\epsilon' ( \xi_1 ) \geq \xi_1^2 \chi( \epsilon \xi_1 ) + 
\e^{-1} \xi_1 ( 1 - \chi ( \epsilon \xi_1 ) ) \geq M \xi_1 . \]
Hence, by taking $ M = \max ( -\Im a + 1, 2 ) $, and $ \e < 1/M $, 
\[ \begin{split}   \| f_{\e} \| \| \widehat v_{\e} \| & \geq 
\Im \langle P_{\e} v_{\e} , v_{\e} \rangle  = 
\langle ( \xi_1^2 G_\epsilon' ( \xi_1 ) + (\Im a + 1 ) 
\xi_1 ) \widehat v_{\e} , \widehat v_{\e} \rangle \\
& \geq 
\| \widehat v_{\e} \|^2 - \| ( 1 +|\xi_1|(|\Im a|+1)) \widehat v_{\e} |_{\xi_1 \leq M } \| \|
\widehat v_{\e} \| \geq \| \widehat v_{\e} \|^2 
- C_1  \| \widehat v_{\e} \|, 
\end{split} \]
where $ C_1 := ( |\Im a | + 1)e^M \| v \|_{ H^1 } $ is independent of $ \e $.
This implies that
\[  \| \widehat v_{\e} \| \leq \|f_\e \| + C_1 \leq C_0 + C_1  . \]
Letting $ \epsilon \to 0 $ gives
$ \| e^{ \xi_1 } \widehat v |_{\xi_1 \geq 0 } \| \leq C $. A similar argument
applies to $ \xi_1 \leq 0  $ which shows that
\[   e^{ |\xi_1|}\widehat v \in L^2 , \]
and consequently that  $ u ( x ) = e^{ i x_2 \tau } v(x_1 ) $ is analytic. 

In the actual proof, the Fourier transform is replaced by the FBI transform
\eqref{eq:FBI} and its deformation \eqref{eq:TLa}
 defined using a suitably chosen $ G_\epsilon $ satisfying \eqref{eq:Geps0} (see Lemma \ref{l:G}
which is the heart of the argument). One difficulty not present in the simple one dimensional case is the localization in other variables.
It is here that the $ \CI $ normal forms of \cite{stern},\cite{gus} and \cite{hab} are particularly useful. It is essential that no analyticity is needed in the construction of $ G_\epsilon $.

\subsection{Applications to scattering theory}
\label{s:soma}

As already indicated in \cite{zulu} analyticity of smooth solution to the Vasy operator \eqref{eq:Plag} implies analyticity of resonant
states and of their radiation patterns. We review this here and, in Theorem \ref{t:3}, present a slightly stronger result. 

For a detailed presentation of scattering on asymptotically hyperbolic manifolds we refer to \cite[Chapter 5]{dizzy}. To state Theorem \ref{t:3}, let
$  \overline M$ be a compact  $ n+1$ dimensional manifold with boundary 
$ \partial M \neq \emptyset$ and let $ M :=  \overline  M  \setminus \partial M $. We assume that $ \overline M $ is a {\em real analytic} manifold near 
$ \partial M $.  
A metric $ g $ on $ M $ is called 
{\em asymptotically hyperbolic} and {\em analytic near infinity} 
if there exist functions $ y' \in \CI ( \overline M ; \partial M )$ and
 $ y_1  \in {\CI} (  \overline M ; (0, 2)  ) $, 
$ y_1|_{\partial M } = 0 $, $ dy_1|_{\partial 
M } \neq 0 $, such that 
\begin{equation}
\label{eq:coords}  \overline M \supset y_1^{-1} ( [0 , 1 ) ) \ni m \mapsto ( y_1( m), y'( m ) ) 
\in [ 0 , 1 ) \times \partial M \end{equation}
is a real analytic diffeomorphism, 
and near $ \partial M $ the metric has the form,
\begin{equation}
\label{eq:gash}   g|_{ y_1 \leq \epsilon }  = \frac{ dy_1^2 + h ( y_1 )  }{ y_1^2 } ,  \end{equation}
where 
 $ [ 0, 1 ) \ni t \mapsto h ( t ) $, 
is an analytic family of real analytic Riemannian metrics on $ \partial M $.

 Let 
 \[  R_g ( \lambda ) = ( - \Delta_g - \lambda^2 - (n/2)^2 )^{-1} :
L^2( M, d \vol_g ) \to H^2 ( M, d \vol_g ) , \ \  \Im \lambda > 0 . \]
Mazzeo--Melrose \cite{mm} and Guillarmou
\cite{g} proved that 
\begin{equation}
\label{eq:mgm}
\begin{gathered}
R_g ( \lambda ) : \CIc ( M ) \to \CI ( M ) , 
\end{gathered}
\end{equation}
continues to a meromorphic family of operators for $ \lambda \in \CC \setminus i( - {\textstyle{\frac12}}  - \mathbf N) $. In addition,
Guillarmou \cite{g}  showed that if the metric is {\em even}, that is,
\begin{equation}
\label{eq:gash1}   g|_{ y_1 \leq \epsilon }  = \frac{ dy_1^2 + h ( y_1^2 )  }{ y_1^2 } ,  \end{equation}
(see \cite[Theorem 5.6]{dizzy} for an invariant formulation), then 
$ R_g ( \lambda ) $ is meromorphic in $ \CC $. In particular, 
for $ \lambda \neq 0  $ we have the following
Laurent expansion
\[  R_g ( \zeta ) = \sum_{ j=1}^{J ( \lambda ) } \frac{ ( - \Delta_g - \lambda^2 - 
(n/2)^2 )^{ j-1} \Pi ( \lambda ) }{ ( \zeta^2 
- \lambda^2)^j} + A ( \zeta, \lambda ) , \
\ \ \Pi (\lambda ) := \frac{1}{ 2 \pi i } \oint_\lambda
R_g ( \zeta ) 2 \zeta d \zeta ,
\]
where $ \zeta \mapsto A ( \zeta, \lambda ) $ is holomorphic near $ \lambda $. 
For $ \lambda = 0 $ we have a Laurent expansions in powers of 
$ \zeta^{-j} $. 

The operator $ \Pi ( \lambda ) $ has finite rank and its range consists of {\em generalized resonant states}. We then have
\begin{theo}
\label{t:3}
Suppose that $ ( M , g )  $ is an even  asymptotically hyperbolic manifold 
(in the sense of \eqref{eq:gash1}) 
analytic near conformal infinity $ \partial M $.
Then for $ \lambda \in \CC \setminus 0 $, 
\begin{equation}
\label{eq:mgm2}
\begin{gathered}
u \in \Pi ( \lambda ) \CIc ( M )  
 \ \ \Longrightarrow \ \  
u =  y_1^{ - i \lambda + \frac{n}{2} } F,  \ \ F |_{\partial M } 
\in C^\omega ( \partial M ) .
\end{gathered} \end{equation}
Moreover, in coordinates of \eqref{eq:gash1}, $ F ( y ) = f ( y_1^2, y' ) $, $ y' \in \partial M $ where $ f \in C^\omega ( (-\delta, \delta) 
\times \partial M ) $.
\end{theo}
\begin{proof}
The metric \eqref{eq:gash} (in the coordinates valid near the boundary) gives the following Laplace operator:
\begin{gather}
\label{eq:Deltagg} 
\begin{gathered} - \Delta_g = ( y_1 D_{y_1} )^2 + 
i ( n  + y_1 \gamma_0 ( y_1^2, y') ) y_1
D_{y_1} - y_1^2 \Delta_{h (y_1 ) } , \\
\gamma_0 ( t, y') := - {\textstyle{\frac12}} \partial_t \bar h ( t ) / \bar h ( t ) , \ \
\bar h ( t ) := \det h ( t ) , \ \ D := \textstyle{\frac 1 i } \partial .
\end{gathered}
\end{gather} 
Following Vasy \cite{vas} we change the variables to $ x_1 = y_1^2 $,
$ x' = y'$ so that
\begin{equation}
\label{eq:firstconj} y_1^{ i \lambda - \frac n 2 }   ( - \Delta_g -
\lambda^2 - ({\textstyle{\frac n 2}})^2  ) y_1^{-i \lambda + \frac n 2 } =
x_1 P ( \lambda ) ,  \end{equation}
where, near $ \partial M $, $ P ( \lambda ) $ is given by 
\eqref{eq:Plag}. This operator is considered on $ X := 
(( - \delta, 0 ]_{x_1} \times \partial M) \sqcup M $. 
 The key fact is that $ P ( \lambda) $ is 
a Fredholm family operators on suitable spaces, $ P ( \lambda )^{-1} $ 
is meromorphic and its poles can be studied using microlocal methods -- 
see \cite{vas}, \cite[Chapter 5]{dizzy} and also \cite[\S 2]{V4D} for a short self-contained presentation. 

From meromorphy of $ P ( \lambda )^{-1} $ we obtain meromorphy of
\eqref{eq:mgm} using \eqref{eq:firstconj}:
\begin{equation}
  \label{e:ah-merinv-constructed}
R_g (\lambda)f:=y_1^{{n\over 2}-{i\lambda}}\big(P(\lambda)^{-1}y_1^{{i\lambda}-{n+2\over 2}}f\big)\big|_M\ \in\ C^\infty(M).
\end{equation}
Here we make $y_1^{{i\lambda}-{n+2\over 2}}f$ into an element of $\CIc(X)$
by extending it by zero outside of $M$. 
Near any $ \lambda $, $  P( \zeta )^{-1}  = \sum_{k=1}^{K ( \lambda )} { Q_j ( \lambda )}{ (\zeta - \lambda )^{-j } } 
+ Q_0 ( \zeta, \lambda )$, 
with $ Q_j (\lambda ) $ operators of finite rank and $ \zeta \mapsto
Q_0 ( \zeta, \lambda ) $ is analytic near $ \lambda $. We then have 
\[  \Pi ( \lambda ) = \tfrac{1}{ 2 \lambda } y_1^{{n\over 2}-{i\lambda}} 
Q_1 ( \lambda ) y_1^{{i\lambda}-{n+2\over 2}}  . \]
Hence, the claim about the range of $ \Pi ( \lambda ) $ follows from 
analyticity of functions in the range of $ Q_1 ( \lambda ) $. This
follows from Theorem \ref{t:1}. In fact, 
$ P ( \zeta ) = P ( \lambda ) + ( \zeta - \lambda ) V $, 
$ V := - 4 D_{x_1} + i \gamma ( x ) $, and hence
\[  P ( \lambda ) Q_k ( \lambda ) = - V Q_{k+1} ( \lambda) , 
\ \ Q_{ K+1} ( \lambda ) := 0 . \]
Since we already know that the ranges of $ Q_k $'s are in $ \CI $ 
(see \cite[(5.6.10)]{dizzy}) 
we inductively conclude that the ranges are in $ C^\omega$. 
\end{proof}

\noindent
{\bf Remark.} Vasy's adaptation of Melrose's radial estimates \cite{mel} shows that to conclude that 
$ u \in \CI $ when $ P ( \lambda ) u \in \CI $ (see \eqref{eq:Plag}), we only need to assume that $ u \in H^{s+1} $ near $ m_0 $, where
$  s + {\textstyle{\frac12}} > - \Im \lambda$, 
see \cite[\S 4, Remark 3]{V4D}.

\section{Preliminaries on FBI transforms and their deformations}
\label{s:FBI}

We will use the FBI transform defined in \cite{gaz1} in its $ \RR^n $
(rather than $ \TT^n $) version. Since the weights we use will be compactly supported in $ x $ the same theory applies. The constructions there are inspired by the works of 
Boutet de Monvel--Sj\"ostrand \cite{BS}, 
Boutet de Monvel--Guillemin \cite{BG}, Helffer--Sj\"ostrand \cite{HS}
and Sj\"ostrand \cite{Sj96}. An alternative approach to using the classes of weights we need here was developed independently and in greater generality by 
Guedes Bonthonneau--J\'ez\'equel \cite{Guj}.

\subsection{Deformed FBI transforms}\label{s:dFBI}
We define
\begin{equation} 
\label{eq:FBI}
T u ( x, \xi )  :=  h^{-\frac{3n}{4}}\int_{\RR^n} 
e^{ \frac i h ( \langle x-y,\xi\rangle +\tfrac{i}{2}\langle \xi\rangle (x-y)^2)} 
\langle \xi\rangle^{\frac{n}{4}}u(y)dy, \ \ u\in \CIc (\RR^n ),
\end{equation}
recalling that
the left inverse of $ T $ is given by 
\begin{equation}
\label{eq:iFBI}
S v ( y ) =\frac{ 2^{\frac n 2}h^{-\frac{3n}{4}} }{(2\pi )^{\frac{3n}{2}}} \int_{\RR{2n} } e^{-\frac{i}{h}
( \langle x-y,\xi\rangle - \tfrac{i}{2}\langle \xi\rangle (x-y)^2)}\langle \xi\rangle^{\frac{n}{4}
} (1+\tfrac{i}{2} \langle x-y, {\xi}/{\langle \xi\rangle}\rangle)
  v(x,\xi)dxd\xi, 
\end{equation}
see \cite[Proposition 2.2]{gaz1}. 

The first fact we need is the characterization of Sobolev spaces
and of the $ \CI $ wave front set using the FBI transform \eqref{eq:FBI}.
To formulate it we use semiclassical Sobolev spaces $ H_h^s $ 
(see for instance \cite[\S 7.1]{zw} or \cite[Definition E.18]{dizzy}) 
but we should in general think of $ h $ as being fixed.

\begin{prop}
\label{p:FBI2H}
There exists a constant $ C $ such that for $ u \in \mathscr S'(\RR^n)  $, 
\begin{equation}
\label{eq:Hs2xi}    
\| u \|_{ H^s_h } \leq C \| \langle \xi \rangle^s T u \|_{ L^2 ( T^* \RR^{n} ) } \leq C^2  \| u \|_{ H^s_h} .
\end{equation}
Moreover, 
\begin{equation*}
( x_0, \xi_0 ) \notin \WF ( u ) \, \Leftrightarrow
\, \left\{ \begin{array}{l} \exists \, \chi \in S^0 ( T^*\RR^n ) , \
\text{$ \chi \equiv 1 $ in a conic neighbourhood of $ ( x_0, \xi_0 ) $,}  \\
\forall \, N \ \exists \, C_N  \ \ \| \langle \xi \rangle^N \chi T u \|_{ L^2 ( T^* \RR^n ) } \leq C_N .
\end{array} \right.
\end{equation*}
\end{prop}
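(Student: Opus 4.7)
For the Sobolev equivalence \eqref{eq:Hs2xi}, the plan is to pass to the Fourier side in the variable $x$. Completing the square in the quadratic phase and doing the Gaussian integral in $y$ yields
\[
\widehat{Tu}(\eta,\xi) \;=\; c_n\, h^{-n/4}\langle\xi\rangle^{-n/4}\, \widehat{u}(\eta)\, \exp\!\Bigl(-\tfrac{(\xi-h\eta)^2}{2h\langle\xi\rangle}\Bigr),
\]
with $c_n=(2\pi)^{n/2}$. After Plancherel in $x$ and integration in $\xi$ against the weight $\langle\xi\rangle^{2s}$, the equivalence reduces to the elementary estimate
\[
h^{-n/2}\int \langle\xi\rangle^{2s-n/2}e^{-(\xi-h\eta)^2/(h\langle\xi\rangle)}\,d\xi \;\asymp\; \langle h\eta\rangle^{2s},
\]
valid because the Gaussian concentrates $\xi$ at $h\eta$ on the scale $\sqrt{h\langle h\eta\rangle}\ll\langle h\eta\rangle$, so that $\langle\xi\rangle^{2s-n/2}\sim\langle h\eta\rangle^{2s-n/2}$ on the support. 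Substituting back gives $\|\langle\xi\rangle^sTu\|_{L^2(T^*\RR^n)}\asymp\|u\|_{H^s_h}$.

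For the wave front characterization, the reverse implication ($\Leftarrow$) uses the inversion identity $u=STu$ from \eqref{eq:iFBI}. Given $\chi\in S^0$ equal to $1$ in a conic neighbourhood of $(x_0,\xi_0)$ with $\|\langle\xi\rangle^N\chi Tu\|_{L^2}\leq C_N$ for every $N$, I split $u=S(\chi Tu)+S((1-\chi)Tu)$. The first term is smooth globally: differentiating under the integral defining $S$ brings down powers of $\xi/h$ which are absorbed by the polynomial decay of $\chi Tu$. For the second term, $1-\chi$ vanishes near $(x_0,\xi_0)$, and I conclude that $(x_0,\xi_0)\notin\WF(S((1-\chi)Tu))$ by a non-stationary phase argument in the kernel of $S$ between a point $(x,\xi)$ outside the support of $1-\chi$ and a test point near $(x_0,\xi_0)$.

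The forward implication ($\Rightarrow$) proceeds by a pseudodifferential cutoff. I pick $A\in\Psi^0(\RR^n)$ with $A\equiv I$ microlocally in a conic neighbourhood $V$ of $(x_0,\xi_0)$ and $Au\in C^\infty$; then $Tu=TAu+T(I-A)u$. For any $\chi\in S^0$ supported in a smaller cone $V'\Subset V$, the term $\chi TAu$ has arbitrary polynomial decay in $\langle\xi\rangle$ by the Sobolev equivalence of the first part applied to $Au\in\bigcap_s H^s_h$. For $\chi T(I-A)u$, the symbol of $I-A$ vanishes on $V$, so composing $T\circ(I-A)$ yields a kernel whose $y$-phase is non-stationary on $(x,\xi)\in V'$ when the integrand is supported away from $V$, producing arbitrary decay in $\langle\xi\rangle$ after integrating by parts.

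The main technical obstacle is the bookkeeping in the non-stationary phase arguments: the kernel of $S$ carries both a polynomial factor $1+\tfrac{i}{2}\langle x-y,\xi/\langle\xi\rangle\rangle$ and a $\xi$-dependent Gaussian weight, so the direction of non-stationarity must be balanced against powers of $\langle\xi\rangle^{-1}$ rather than $h$, and the conic support of $\chi$ must remain compatible with the support of $1-\chi$ (or of $I-A$) throughout the integration by parts. These estimates are precisely those developed in \cite{gaz1} and \cite{Sj96}, and the argument reduces to matching conventions between the semiclassical Sobolev norm and the microlocal cutoffs used there.
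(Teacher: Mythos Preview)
Your approach is correct and considerably more explicit than the paper's, which simply defers to G\'erard \cite{ger} and Delort \cite{De} and notes that the argument parallels the analytic case in Proposition~\ref{p:awf}.  Your Fourier-side computation for the Sobolev equivalence is clean and direct; the paper does not spell this out at all.

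For the wave-front characterization, your splittings $u=S(\chi Tu)+S((1-\chi)Tu)$ and $Tu=TAu+T(I-A)u$ are the standard route and are correct.  The paper's template (Lemma~\ref{l:TS} and the proof of Proposition~\ref{p:awf}) organizes the same non-stationary-phase estimates slightly differently: instead of applying $S$ and then testing for smoothness, it analyses the kernel of $\chi_0 TS\chi_1$ directly and obtains operator bounds between weighted $L^2$ spaces.  This packages the bookkeeping you flag in your last paragraph into Schur-test estimates on an explicit kernel, which avoids having to track the polynomial factor and the $\langle\xi\rangle$-dependent Gaussian separately---but the underlying analysis is the same.

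One small point to tighten in your $(\Rightarrow)$ argument: you use $Au\in\bigcap_s H^s_h$, but $u\in\mathscr S'$ and $Au\in C^\infty$ do not by themselves place $Au$ in every Sobolev space.  You should first multiply $u$ by a cutoff in $C_c^\infty$ equal to $1$ near $x_0$ (which does not change the wave front set over that neighbourhood), or equivalently choose $A$ with compactly supported Schwartz kernel, so that $Au\in C_c^\infty$.
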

\begin{proof}
This follows from the characterization of the $ H^s $ based wave front 
sets in G\'erard \cite{ger} as stated in \cite[Theorem 1.2]{De}. Since the arguments are similar to the more involved analytic case presented in 
Proposition \ref{p:awf} we omit the details.
\end{proof}

As in \cite[\S 2]{Sj96} and \cite[\S 3]{gaz1} we introduce a geometric deformation of 
$ \RR^{2n} $, $ \Lambda = \Lambda_G$:
\begin{equation}
\label{e:LambdaDef}
\begin{gathered}
\Lambda:=\{(x-iG_\xi(x,\xi),\xi+iG_x(x,\xi))\mid (x,\xi)\in \RR^{2n} \}\subset \CC^{2n}, \\
\supp G \subset K \times \RR^n , \ \ K \Subset \RR^n , \\ 
\sup_{ |\alpha| + |\beta| \leq 2 } \langle \xi\rangle^{-1+|\beta|} | \partial_x^\alpha \partial_\xi ^\beta G(x,\xi) | \leq \epsilon_0, \ \ 
 | \partial_x^\alpha \partial_\xi ^\beta G(x,\xi) | \leq C_{\alpha \beta} \langle \xi\rangle^{1-|\beta|},
\end{gathered}
\end{equation}
where $ \epsilon_0 $ is small and fixed (so that the constructions below 
remain valid as in \cite{gaz1}).  For convenience, we change here
the convention from \cite{gaz1}: it amounts to to replacing $ G $ by 
$ - G $ everywhere.

This provides us with the following new objects: the 
deformed FBI transform (see \cite[\S 4]{gaz1}),
\begin{equation}
\label{eq:TLa}
\begin{gathered}
T_\Lambda u ( x, \xi ) :=  T u ( x - i G_\xi ( x, \xi ) , \xi + i G_x (x, \xi)) ,  \ \  u \in \mathscr B_\delta , \\
\mathscr B_\delta := \{ u \in \mathscr S ( \RR^n )  :  \int_{ \RR^n }
|\widehat U ( \xi)|^2 e^{ 4 \delta |\xi| } d\xi  < \infty \}, 
\end{gathered}
\end{equation}
the the spaces $ H_\Lambda^s $, defined as in \cite[\S 4]{gaz1},
\begin{equation}
\label{eq:HLa}
H_\Lambda^s := \overline{\mathscr B_{\delta_0}}^{ \| \bullet \|_{H^s_\Lambda } } , \ \ 
\| u \|_{ H^s_\Lambda }^2 := \int_{\Lambda} \langle \Re \alpha_\xi \rangle^{2s} 
| T_\Lambda u ( \alpha )|^2 e^{-2 H ( \alpha ) /h } d \alpha , 
\end{equation}
and the orthogonal projector
\[ \Pi_\Lambda : L_\Lambda := L^2 ( \Lambda , e^{-2 H ( \alpha ) /h } d \alpha) \to T_\Lambda H_\Lambda , \ \ \ H_\Lambda := H_\Lambda^0 , \]
described asymptotically (as $ h \to 0 $ and as $ \xi \to \infty $) in 
\cite[\S 5]{gaz1}. The weight $ H $ appears naturally in this subject and is given by \cite[(3.3),(3.4)]{gaz1} i.e. $H(x,\xi)=\xi\cdot G_\xi(x,\xi)-G(x,\xi)$. The deformed FBI transform $ T_\Lambda $ has an exact left inverse $ S_\Lambda $ obtained by 
deforming $ S$ in \eqref{eq:iFBI}.

We now prove a slightly modified version of \cite[Proposition 6.2]{gaz1}:
\begin{prop}
\label{p:6.2}
Suppose that $ P =  \sum_{|\alpha| \leq m } a_\alpha D^\alpha $ is a
differential operator with $ a_\alpha \in \CIc ( \RR^n ) $ satisfying,
\[  a_\alpha \in C^\omega ( U ) , \ \ K \Subset U , \]
for an open set $ U $ and $ K$ as in \eqref{e:LambdaDef}. Then 
\[  \Pi_\Lambda T_\Lambda h^m P S_\Lambda = \Pi_\Lambda b_P \Pi_\Lambda 
+ \mathcal O ( h^\infty)_{ H^{-N}_\Lambda  \to H^N_\Lambda } , \]
where 
\begin{equation}
\label{eq:p6.2} 
\begin{gathered}
 b_P ( x, \xi )  \sim \sum_{j=0}^\infty h^j b_j ( x , \xi) , 
\ \ b_j \in S^{m-j} ( \RR^{2n} ) , \\ b_0 = p|_\Lambda := 
p (  x - i G_\xi ( x, \xi ) , \xi + i G_x ( x, \xi)) . 
\end{gathered}
\end{equation}
\end{prop}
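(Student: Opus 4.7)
The plan is to follow the proof of \cite[Proposition 6.2]{gaz1}, verifying that only local analyticity of the $a_\alpha$ on a neighbourhood of $K$ is needed, since the complex deformation $\Lambda$ coincides with $\RR^{2n}$ outside $K \times \RR^n$. I first split $a_\alpha = \chi a_\alpha + (1-\chi) a_\alpha$ with $\chi \in \CIc(U)$ identically $1$ on a neighbourhood of $K$, so that $\chi a_\alpha$ extends to a bounded holomorphic function on a fixed complex neighbourhood of $K$ while $(1-\chi)a_\alpha$ has its support disjoint from $K$.

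For the analytic piece $\chi a_\alpha$, I would write the Schwartz kernel of $T_\Lambda h^m \chi a_\alpha D_y^\alpha S_\Lambda$ as a multiple oscillatory integral, integrate $h^m D_y^\alpha$ by parts against the FBI phase to produce a factor $(\xi + iG_x(x,\xi))^\alpha$ modulo lower-order terms, and shift the $y$-contour from $\RR^n$ to $\RR^n - iG_\xi(x,\xi)$ using holomorphy of $\chi a_\alpha$. The shift is of size $O(\epsilon_0)$ and is nontrivial only for $x \in K$, so it remains in the holomorphic domain. Stationary phase in $y$ at the critical point $y = x - iG_\xi(x,\xi)$ yields a symbolic expansion $b_P \sim \sum_j h^j b_j$ with $b_j \in S^{m-j}(\RR^{2n})$ and principal part $b_0 = p|_\Lambda$ as claimed; using $T_\Lambda S_\Lambda = \Pi_\Lambda$ on $T_\Lambda H_\Lambda$ together with the description of $\Pi_\Lambda$ from \cite[\S 5]{gaz1} then produces the operator $\Pi_\Lambda b_P \Pi_\Lambda$ modulo a controlled remainder.

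For the smooth piece $(1-\chi)a_\alpha$, its $x$-support is disjoint from $K$. In the oscillatory integral for $T_\Lambda h^m (1-\chi)a_\alpha D^\alpha S_\Lambda$, the FBI Gaussian factors force the physical $y$-variable to lie within $O(\sqrt{h/\langle\xi\rangle})$ of points $x \in K$ once the deformation has been taken into account (since $\Lambda$ equals $\RR^{2n}$ outside $K$ in $x$), so the coefficient $(1-\chi)a_\alpha(y)$ combined with repeated non-stationary phase arguments gives an $O(h^\infty)_{H_\Lambda^{-N}\to H_\Lambda^N}$ contribution after sandwiching by $\Pi_\Lambda$. The main obstacle, and the only substantive departure from \cite{gaz1}, is the uniform control of these remainders as $\xi \to \infty$: one must verify that the $y$-contour deformation above is admissible together with the $\langle\xi\rangle$-growth of $G$ and that the resulting symbolic seminorms on $b_P$ and the $O(h^\infty)$ errors are uniform; this is handled by the Sobolev and symbolic calculus on $\Lambda$ developed in \cite[\S\S 4,5]{gaz1}, with no essentially new difficulty once the splitting above is in place.
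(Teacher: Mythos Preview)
Your decomposition $a_\alpha = \chi a_\alpha + (1-\chi)a_\alpha$ leads to a genuine error in the treatment of the second piece. You claim that $T_\Lambda h^m(1-\chi)a_\alpha D^\alpha S_\Lambda$ is $\mathcal O(h^\infty)_{H_\Lambda^{-N}\to H_\Lambda^N}$ because the Gaussian forces $y$ near $K$. This is false: the kernel of $T_\Lambda M_a S_\Lambda$ depends on \emph{two} points $\alpha,\beta\in\Lambda$, and the Gaussians localize $y$ near $\alpha_x$ and $\beta_x$, not near $K$. When both $\Re\alpha_x$ and $\Re\beta_x$ lie outside $K$ (so $\alpha,\beta$ are real and there is no deformation), $y$ is localized near these real points and $(1-\chi)a_\alpha(y)$ is in general nonzero there. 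That contribution is \emph{not} negligible: it is precisely what produces the symbol $b_0(x,\xi)=p(x,\xi)$ at points $x\notin K$, where $G=0$ and $p|_\Lambda=p$. Dropping it would leave $b_P$ wrong on the whole region where the deformation is trivial.

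There is a second, related gap in the handling of the ``analytic'' piece. Multiplying by $\chi$ destroys analyticity on $\supp\chi\setminus\{\chi\equiv 1\}$, so $\chi a_\alpha$ does \emph{not} extend holomorphically to a strip around $\RR^n$, and a uniform contour shift $y\mapsto y-iG_\xi(x,\xi)$ is not justified by Cauchy's theorem. The paper avoids both problems by \emph{not} splitting the coefficient. Instead it analyzes the kernel of $T_\Lambda M_a S_\Lambda$ directly and performs a case analysis on whether $\Re\alpha_x,\Re\beta_x$ lie in $K$. When at least one does, it uses a \emph{localized} deformation $y\mapsto y+\psi(y)y_c(\alpha,\beta)$ with $\psi\in\CIc(U)$, so the shift stays in the domain of holomorphy of $a$, and then applies complex stationary phase. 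When neither lies in $K$, $\alpha$ and $\beta$ are real; the paper Taylor expands $a(y)$ about $y=\alpha_x$ into a polynomial part $a_N$ (entire, so one may deform freely to $y_c$) plus a remainder $O(|y-\alpha_x|^{2N})$ whose contribution is bounded by $h^{N-n/2}(\langle\alpha_\xi\rangle+\langle\beta_\xi\rangle)^{-N}$ via a direct estimate and Schur's test. This is the step that your outline lacks and that makes the symbol calculus go through for merely $\CIc$ coefficients that are analytic only on $U$.
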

We remark that the expansion remains valid when $ h $ is fixed. We can use 
smallness of $ h $ to dominate the lower order terms and then keep it fixed. 
\begin{proof}
The result follows from the analogue of \cite[Lemma 6.1]{gaz1} where
the operator $ T_\Lambda h^m P S_\Lambda $ is described in the case
where the coefficients of $ P $ are globally analytic. Here
we point out that the analyticity of the coefficients is only needed
in the neighbourhood $ U $ of $ K \Subset \RR^n $ such that in 
\eqref{e:LambdaDef} 
$ \supp G \subset K \times \RR^n $ and $ \epsilon_0 $ is small enough
depending on the size of the complex neighbourhood to which 
the coefficients extend holomorphically.

In fact, arguing as in the proof of \cite[Proposition 6.2]{gaz1} all we need is that for $ a \in \CIc ( \RR^n ) $ and $ a \in C^\omega ( U ) $, the Schwartz kernel of 
$ T_\Lambda M_{a } S_\Lambda $, $ M_a f ( x ) := a ( x ) f (x ) $, 
 is given by 
\begin{equation}
\label{eq:Kea}
\begin{gathered}K_a ( \alpha, \beta ) = c_0 h^{-n} e^{ \frac{i}{h} \Psi ( \alpha, 
\beta) } A ( \alpha , \beta ) + r ( \alpha, \beta ) , 
\ 
\ \ \alpha, \beta \in \Lambda = \Lambda_G ,\\
{r(\alpha,\beta)\text{ is the kernel of an operator }R=O(h^\infty) :  
{{H_{\Lambda}^{-N}\to H_{\Lambda}^N.}}}
  \end{gathered}
\end{equation}
The phase in \eqref{eq:Kea} is given by 
\begin{equation}
\label{eq:Psi}  \Psi (  \alpha, \beta ) = 
\frac{i}{2}\frac{(\alpha_\xi-\beta_\xi)^2}{\langle \alpha_\xi\rangle +\langle \beta_\xi\rangle}+\frac{i}{2}\frac{ \langle \beta_\xi\rangle \langle \alpha_\xi\rangle (\alpha_x-\beta_x)^2}{\langle \alpha_\xi\rangle +\langle \beta_\xi\rangle}+\frac{\langle \beta_\xi\rangle \alpha_\xi+\langle \alpha_\xi\rangle \beta_\xi}{\langle \alpha_\xi\rangle +\langle \beta_\xi\rangle}\cdot (\alpha_x-\beta_x),
\end{equation}
and the amplitude satisfies
\[ A \sim \sum_{j=0}^\infty h^j \langle \alpha_\xi \rangle^{-j} 
A_j,  \ \ \  A_0 ( \alpha , \alpha ) = a|_{\Lambda } ( \alpha)  , \]
and $ A_j $ are supported in a small conic neighbourhood of the diagonal in  $ \Lambda \times \Lambda $. 
We
note that if $ \epsilon_0 $ is small enough, $ a $ extends to some neighbourhood of $ K $ in $ \CC^n $ and hence $ a| _{\Lambda }
=  a ( x - i G_\xi ( x, \xi ) ) $ is well defined. 

To see \eqref{eq:Kea} we use the definitions of $ T_\Lambda $ and 
$ S_\Lambda $ to write
\begin{equation}\label{e:multiplyMe} K_a ( \alpha, \beta ) = 
c_n  \langle \beta_\xi \rangle^{\frac n 4}  \langle \alpha_\xi\rangle^{\frac{n}{4}} {h^{-\frac{3n}{2}}} \int e^{\frac{i}{h}(\varphi_G(\alpha,y) + \varphi_G^*(\beta,y))} a ( y ) \left( 1 + 
\langle \beta_x - y , \beta_\xi/\langle \beta_\xi \rangle 
\right) dy,\end{equation}
where
\begin{equation}
\label{eq:lost}
\begin{gathered}
\varphi_G ( \alpha, y ) = \Phi ( z , \zeta , y )|_{ z = \alpha_x, 
\zeta = \alpha_\xi } , \ \  
\varphi_G^* ( \alpha, y ) = - \bar \Phi ( z, \zeta , y )|_{ z = \alpha_x, 
\zeta = \alpha_\xi } , \\
\alpha_x = x - i G_\xi ( x, \xi) , \ \ \ 
\alpha_\xi = \xi + i G_x ( x, \xi ) , \\ 
\Phi ( z, \zeta , y ) = \langle z - y , \zeta \rangle + \tfrac i 2
\langle \zeta \rangle ( z - y )^2 , \ \ \
\bar \Phi ( z , \zeta, y ) := \overline{ \Phi ( \bar z , \bar \zeta , y ) }. 
\end{gathered}
\end{equation}

Let $V,V_1$ open such that $K\subset V_1\Subset V\Subset U$. We start by showing that the contribution to $K_a$ away from the diagonal is negligible. For that let $\chi\in C_c^\infty( \mathbb{R})$ with $\chi \equiv 1$ near $0$. Then for all $\delta>0$ small enough, the operator $R_1$ with kernel 
\begin{gather*}
R_1(\alpha,\beta)= K_a(\alpha,\beta)\tilde{\chi}_\delta(\alpha,\beta),\\
\tilde{\chi}_{\delta}(\alpha,\beta):=(1-\chi(\delta^{-1}|\alpha_x-\beta_x|))\left(1-\chi \Big(\frac{|\alpha_\xi-\beta_\xi|}{\delta 
{ \langle |\alpha_\xi-\beta_\xi| \rangle}}\Big)\right)
\end{gather*}
satisfies $R_1=O_{H_{\Lambda}^{-N}\to H_{\Lambda}^N}(h^\infty).$ This amounts to showing that the operator with kernel 
$
R_1(\alpha,\beta)e^{\frac{1}{h}(H(\beta)-H(\alpha))}\langle \alpha_\xi\rangle^N\langle \beta_\xi\rangle^N
$
is bounded on $L^2(\mathbb{R}^{2n})$ with $O(h^\infty)$ norm.

To see this, we first integrate by parts $K$ times in $y$, using that 
$$
|\partial_y\Psi|=|\beta_\xi-\alpha_\xi+i(\langle \alpha_\xi\rangle(y-\alpha_x)+\langle \beta_\xi\rangle(y-\beta_x))|\geq c {\left( 1 + |\alpha_\xi|+|\beta_\xi|  \right)}
$$
on $\supp\tilde{\chi}_\delta$. This reduces the analysis to the case of~\eqref{e:multiplyMe} with $a$ is replaced by $b(\cdot, \alpha,\beta)\in C^\omega(U)\cap C_c^\infty(\mathbb{R}^n)$ with 
$
|b|\leq h^K(\langle |\alpha_\xi|\rangle +\langle |\beta_\xi|\rangle)^{-K}.
$

Next, we choose $\psi\in C_c^\infty(\mathbb{R}^n;[0,1])$ with $\psi\equiv 1$ on $V$ and $\supp \psi\subset U$, and $\psi_1\in C_c^\infty(\mathbb{R}^n;[0,1])$ with $\psi_1\equiv 1$ on $V_1$ and $\supp \psi_1\subset V$. We then deform the contour
$$
y\mapsto y+i\epsilon\psi(y)\frac{\overline{\beta_\xi-\alpha_\xi}}{\langle |\beta_\xi-\alpha_\xi|\rangle}.
$$
This contour deformation is justified since $a\in C^\omega(U)$. The phase in  the integrand of~\eqref{e:multiplyMe} becomes
\begin{align*}
\Psi=&\langle \alpha_x-y,\alpha_\xi\rangle +\langle y-\beta_x,\beta_\xi\rangle +\frac{i\langle \alpha_\xi\rangle}{2}(\alpha_x-y)^2+\frac{i\langle \beta_\xi\rangle}{2}(\beta_x-y)^2\\
&+i\e\psi(y)\frac{|\beta_\xi-\alpha_\xi|^2}{\langle |\beta_\xi-\alpha_\xi|\rangle}+\frac{i\langle \alpha_\xi\rangle}{2}\Big[ 2\e\psi(y)\langle \alpha_x-y,\frac{\overline{\alpha_\xi-\beta_\xi}}{\langle |\beta_\xi-\alpha_\xi|\rangle}\rangle-\e^2\psi^2(y)\frac{|\beta_\xi-\alpha_\xi|^2}{\langle |\beta_\xi-\alpha_\xi|\rangle^2}\Big]\\
&\frac{i\langle \beta_\xi\rangle}{2}\Big[ 2\e\psi(y)\langle \beta_x-y,\frac{\overline{\alpha_\xi-\beta_\xi}}{\langle |\beta_\xi-\alpha_\xi|\rangle}\rangle-\e^2\psi^2(y)\frac{|\beta_\xi-\alpha_\xi|^2}{\langle |\beta_\xi-\alpha_\xi|\rangle^2}\Big]
\end{align*}
In particular, for $y\in V$, and $(\alpha,\beta)\in \supp\tilde{\chi}_\delta$, the integrand is bounded by 
$$
e^{-c(\langle \alpha_\xi\rangle+\langle \beta_\xi\rangle)\langle \alpha_x-\beta_x\rangle/h}
$$
which is negligible (even after multiplication by $e^{\frac{1}{h}(H(\beta)-H(\alpha))}\langle\alpha_\xi\rangle^N\langle \beta_\xi\rangle^N$).

For the integral over $y\notin V$, we consider three cases. First, if both $\Re \alpha_x\in K$ and $\Re \beta_x\in K$, then it is easy to see that the integrand is bounded by 
$$
e^{-c(\langle \alpha_\xi\rangle+\langle \beta_\xi\rangle)(\langle \alpha_x-\beta_x\rangle +|y|)/h}
$$
and hence produces a negligible contribution. Next, if $\Re \alpha_x\notin K$ and $\Re \beta_x\notin K$, then $H(\alpha)=H(\beta)=0$, $\alpha,\beta$ are real, and integration by parts in $y$ shows that the contribution is negligible.

Finally, we consider the case $\Re \alpha_x\in K$, $\Re \beta_x\notin K$, (the case $\Re \beta_x\in K$ and $\Re \alpha_x\notin K$ being similar). In this case, we have $H(\beta)=0$ and $\beta$ real. Since $y\notin V$, we have that the integrand is bounded by 
$
e^{-c\langle \alpha_\xi\rangle\langle \alpha_x-y\rangle/h}h^K\langle \beta_\xi\rangle^{-K}
$
and hence this term is also negligible.

Since $R$ is negligible, we may assume from now on that  
$$
|\alpha_x-\beta_x|\ll 1 \ \text{ and } \ 
|\alpha_\xi-\beta_\xi|\ll \langle|\alpha_\xi|\rangle+\langle |\beta_\xi|\rangle.$$
In particular, there are three cases: $\Re \alpha_x\in K$ and $\Re \beta_x\in V_1$, $\Re \beta_x\in K$ and $\Re \alpha_x\in V_1$, or $\Re\alpha_x\notin K$ and $\Re \beta_x\notin K$.

The first two cases are similar, so we consider only one of them. Since $\Re \alpha_x\in K$ and $\Re \beta_x\in V_1$, the contribution from  $y\notin V$ is negligible. Therefore, we may deform the contour to 
$$
y\mapsto y+ \psi(y)y_c(\alpha,\beta),\qquad y_c(\alpha,\beta)=\frac{i(\beta_\xi-\alpha_\xi)+\langle \alpha_\xi\rangle \alpha_x+\langle \beta_\xi\rangle \beta_x}{\langle \alpha_\xi\rangle+\langle \beta_\xi\rangle}.
$$
The proof in this case then follows from the method of complex stationary phase. 

When, both $\Re\alpha_x\notin K$ and $\Re \beta_x\notin K$, $\alpha=\Re \alpha$, $\beta=\Re \beta$, and $H(\alpha)=H(\beta)=0$. In order to handle this situation, we will Taylor expand $a(y)$ around $y=\alpha_x$. For that we first consider~\eqref{e:multiplyMe} with $a=O(|y-\alpha_x|^{2N})$.  In that case, we consider the integral
\begin{equation}
\label{e:K_N}
\begin{aligned}
&K_N(\alpha,\beta):=h^{-\frac{3n}{2}}\int e^{\frac{i}{h}(\langle \alpha_x-y,\alpha_\xi\rangle +\frac{i}{2}(\langle \alpha_\xi\rangle (\alpha_x-y)^2+\langle \beta_\xi\rangle (\beta_x-y)^2))}\\ &\qquad\qquad\qquad\qquad\qquad\qquad O(|y-\alpha_x|^{2N})\langle \alpha_\xi\rangle^{\frac{n}{4}}\langle \beta_\xi\rangle^{\frac{n}{4}}(1-\tilde{\chi}_\delta(\alpha,\beta))dy.
\end{aligned}
\end{equation}
Changing variables $y\mapsto y+\alpha_x$, 
\begin{align*}
|K_N(\alpha,\beta)|\leq&\int \langle \alpha_\xi\rangle^{\frac{n}{4}}\langle \beta_\xi\rangle^{\frac{n}{4}} \frac{h^{N-\frac{3n}{2}}}{\langle \alpha_\xi\rangle^N}e^{-\frac{\langle \beta_\xi\rangle}{2h}(\beta_x-\alpha_x-y)^2}(1-\tilde{\chi}_\delta)dy\\
&\qquad\qquad\leq C \frac{h^{N-n}}{(\langle \alpha_\xi\rangle+\langle \beta_\xi\rangle)^{N}}e^{-c\frac{\langle \alpha_\xi\rangle +\langle \beta_\xi\rangle}{h}(\alpha_x-\beta_x)^2}(1-\tilde{\chi}_\delta(\alpha,\beta)).
\end{align*}
Therefore, using the Schur test for boundedness, the operator $K_N$ with kernel $K_N(\alpha,\beta)$ satisfies
$$
K_N =O( h^{N-\frac{n}{2}}):H^{-N+\frac{n}{4}+0}_{\Lambda}\to H^{N-\frac{n}{4}-0}_\Lambda
$$
Now, observe that for any $N>0$, 
$$
a(y)=a_N(y)+O(|y-\alpha_x|^{2N})
$$
where $a_N(y)$ is a polynomial of order $2N-1$ in $(y-\alpha_x)$. In particular, 
$$
K_a(\alpha,\beta)=K_{a_N}(\alpha,\beta)+K_N(\alpha,\beta)
$$
Since $a_N$ is analytic and the integrand is exponentially decaying in $y$, we may deform the contour with $y\mapsto y+y_c(\alpha,\beta)$ in the integral forming the kernel of $K_{a_N}$ and apply complex stationary phase as in the case where $\Re \alpha_x\in K$ or $\Re \beta_x\in K$. This finishes the proof of the proposition after taking $N$ large enough.
\end{proof}

\subsection{Analytic wave front set}
\label{s:awf}

We now relate weighted estimates to analyticity. 
\begin{prop}
\label{p:awf}
Let $ T $ be the FBI transform defined in \eqref{eq:FBI} for some 
fixed $ h $, and let $ \psi \in S^1 ( T^* \RR^n  ) $ satisfy 
\begin{equation}
\label{eq:cond_psi}   \psi ( x, \xi )  \geq | \xi | /C , \ \ ( x, \xi ) \in
U \times \Gamma, \end{equation}
where $ U \subset \RR^n $ and $ \Gamma \subset \RR^n \setminus 0 $ is
an open cone. Then, for $ u \in {H^{-N}} ( \RR^n ) $, 
\begin{equation}
\label{eq:awf}  e^{\psi  } { \langle\xi\rangle^{-N}}T u \in L^2 ( T^* \RR^n ) \ \Longrightarrow \
\WFa ( u ) \cap (U \times \Gamma) = \emptyset . \end{equation}
Conversely, {suppose $u\in H^{-N} ( \RR^n ) $}, $ \Gamma_0 \subset \RR^n $ is a conic open set such that
$ \Gamma_0 \cap \mathbb S^{n-1} \Subset \Gamma \cap \mathbb S^{n-1} $, 
 $ U_0 \Subset U $.
Then for any $ \psi \in S^1 ( \RR^n \times \RR^n ) $ with 
$ \supp \psi \subset U_0 \times V_0 $,  
\begin{equation}
\label{eq:cawf}
\WFa ( u ) \cap (U \times \Gamma) = \emptyset 
\ \Longrightarrow \ \exists \, \theta > 0 \ \ \langle \xi\rangle^{-N}e^{\theta \psi  } T u \in L^2 ( T^* \RR^n ) . 
\end{equation} 
\end{prop}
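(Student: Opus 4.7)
The plan is to reduce both implications to the standard pointwise characterization of the analytic wave front set via the FBI transform (see \cite[\S 9.3]{H1} and \cite[\S 2]{Sj96}): in the inhomogeneous setting of \eqref{eq:FBI}, $(x_0,\xi_0)\notin\WFa(u)$ is equivalent to the existence of a conic neighbourhood $W$ of $(x_0,\xi_0)$ and $c>0$ with $|Tu(\alpha)|\leq Ce^{-|\alpha_\xi|/c}$ for $\alpha\in W$. The bridge between pointwise and $L^2$ control will be the reproducing formula $Tu=\Pi Tu$ with $\Pi=TS$, whose Schwartz kernel has the Gaussian off-diagonal decay
\[
|\Pi(\alpha,\beta)|\lesssim h^{-n}\langle\alpha_\xi\rangle^{n/2}\exp\!\left(-\tfrac{c}{h}(\langle\alpha_\xi\rangle+\langle\beta_\xi\rangle)\Bigl(|\alpha_x-\beta_x|^2+\tfrac{|\alpha_\xi-\beta_\xi|^2}{(\langle\alpha_\xi\rangle+\langle\beta_\xi\rangle)^2}\Bigr)\right),
\]
which is the $G\equiv 0$ case of the kernel estimates in \cite[\S 5]{gaz1}.

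For the forward implication, I would fix $(x_0,\xi_0)\in U\times\Gamma$ and pick a small conic neighbourhood $W\Subset U\times\Gamma$ on which $\psi\geq|\xi|/C$. Writing $Tu(\alpha)=\int\Pi(\alpha,\beta)Tu(\beta)\,d\beta$ for $\alpha\in W$, I split the integral into $\beta$ near $\alpha$ (kept inside a slightly larger conic set $W'\Subset U\times\Gamma$ on which the hypothesis still delivers exponential $L^2$-decay of $Tu$) and its complement. On the near piece, Cauchy--Schwarz against $e^{\psi}\langle\xi\rangle^{-N}Tu\in L^2$ combined with the Gaussian decay of $\Pi$ and $\psi\geq|\xi|/C$ produces $|Tu(\alpha)|\leq Ce^{-|\alpha_\xi|/c}$. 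On the far piece, the Gaussian decay of $\Pi$ (either $|\alpha_x-\beta_x|$ bounded below or $|\alpha_\xi-\beta_\xi|/\langle\beta_\xi\rangle$ bounded below) dominates the tempered growth of $Tu$ coming from $u\in H^{-N}$ and \eqref{eq:Hs2xi}, and still leaves an exponential factor in $\langle\alpha_\xi\rangle$. Together this yields pointwise exponential decay on $W$, so $(x_0,\xi_0)\notin\WFa(u)$.

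For the converse, by compactness I cover $\overline{U_0}\times(\overline{\Gamma_0}\cap\mathbb S^{n-1})$ by finitely many conic open sets $W_j$ on each of which $|Tu(\alpha)|\leq C_je^{-|\alpha_\xi|/C_j}$ holds by the pointwise characterization. Since $\psi\in S^1$ satisfies $|\psi(\alpha)|\leq A\langle\alpha_\xi\rangle$, choosing $\theta>0$ with $\theta A<(2\max_j C_j)^{-1}$ ensures $\theta\psi\leq|\alpha_\xi|/(2C_j)$ on $W_j$, hence $e^{\theta\psi}|Tu|\leq Ce^{-|\alpha_\xi|/(2C_j)}$ there. Outside $\supp\psi\subset U_0\times V_0$ the weight satisfies $e^{\theta\psi}\equiv 1$, and $\|\langle\xi\rangle^{-N}Tu\|_{L^2}<\infty$ follows from \eqref{eq:Hs2xi} together with $u\in H^{-N}$. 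Summing the contributions yields $\langle\xi\rangle^{-N}e^{\theta\psi}Tu\in L^2$.

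The main obstacle will be the forward direction: upgrading the $L^2$ estimate with exponential weight to a pointwise exponential bound. The reproducing-kernel argument handles this cleanly provided the Gaussian off-diagonal decay of $\Pi$ is sharp enough; the splitting has to be arranged so that $\beta$ stays inside the exponential-decay region of the hypothesis on the near piece, while on the far piece the Gaussian decay of $\Pi$ defeats the polynomial $H^{-N}$-growth of $Tu$ and still leaves a net exponential factor in $\langle\alpha_\xi\rangle$. The converse is essentially a compactness and symbol-bookkeeping exercise.
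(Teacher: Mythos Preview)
Your approach is essentially the same as the paper's: both directions are reduced to the pointwise FBI characterization of $\WFa$, and the bridge from the weighted $L^2$ hypothesis to a pointwise exponential bound is the reproducing identity $Tu=TS\,Tu$ together with the Gaussian off-diagonal decay of the kernel of $TS$, split into a near piece (where the hypothesis supplies exponential decay) and a far piece (where the kernel itself supplies it against the $H^{-N}$ control). The only packaging difference is that the paper isolates the kernel analysis as an $L^2\!\to\!H^K$ operator bound for $\chi e^{b\langle\xi\rangle}TS\chi_1 e^{-a\langle\xi\rangle}$ and $\chi e^{b\langle\xi\rangle}TS(1-\chi_1)\langle\xi\rangle^M$ (Lemma~\ref{l:TS}) and then invokes Sobolev embedding to pass to a pointwise bound, whereas you apply Cauchy--Schwarz directly against the kernel; both routes use the same estimate \eqref{eq:DalK} and are equally valid.
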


\noindent
{\bf Remark:} Here we do not consider uniformity in $ h $ in the $ L^2 $ bounds. If we demanded that, than we would only need
$ \psi \in \CIc ( T^* \RR^n )$, $ \psi > 0 $ on $ U \times (\Gamma 
\cap \mathbb S^{n-1} ) $. 

The proof is based on the following
\begin{lemm}
\label{l:TS}
Let $ T $ and $ S $ be given by \eqref{eq:FBI} and \eqref{eq:iFBI}, 
respectively, with $ h $ fixed.
Suppose that $ \chi, \tilde \chi \in S^0 ( \RR^n \times \RR^n ) $ and 
$ \supp \chi, \supp  \chi_1 \subset K \times \RR^n $, $ K 
\Subset \RR^n $.
Then for any $ a > 0 $ there exists $ b > 0 $ such that
\begin{equation}
\label{eq:TS}
  \chi  e^{ b \langle \xi \rangle } 
T S  \chi_1 e^{ - a \langle \xi \rangle }  = 
\mathcal O_N ( 1 ) : L^2 ( \RR^{2n}) \to H^N ( \RR^{2n} ) , \end{equation}
for any $ N $.  

If in addition $ \chi_1 \equiv 1 $ on a a conic neighbourhood of the support of $ \chi $, then there exists $ b > 0 $ such that
\begin{equation}
\label{eq:TS1}   \chi  e^{ b \langle \xi \rangle } 
T S ( 1 -   \chi_1) {\langle \xi\rangle^M}   = 
\mathcal O_{N,M} ( 1 ) : L^2 ( \RR^{2n}) \to H^N ( \RR^{2n} ) , \end{equation}
for any $ N $.  
\end{lemm}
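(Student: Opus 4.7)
The plan is to derive a pointwise Gaussian bound on the Schwartz kernel of $TS$ and then read off the weighted $L^2$ bounds via Schur's test. Writing $\alpha=(x,\xi)$ and $\beta=(x',\xi')$, composing \eqref{eq:FBI} with \eqref{eq:iFBI} gives $K_{TS}(\alpha,\beta)$ as an oscillatory integral in $y\in\RR^n$ with phase
$$\Phi(\alpha,\beta,y)=\langle x-y,\xi\rangle-\langle x'-y,\xi'\rangle+\tfrac{i}{2}\langle\xi\rangle(x-y)^2+\tfrac{i}{2}\langle\xi'\rangle(x'-y)^2$$
and amplitude polynomial in $\langle\xi\rangle,\langle\xi'\rangle$ times the factor $(1+\tfrac{i}{2}\langle x'-y,\xi'/\langle\xi'\rangle\rangle)$ inherited from $S$. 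This is essentially the integral that appears in Proposition \ref{p:6.2} with amplitude $a\equiv 1$, and no analyticity enters. The $y$-Hessian $i(\langle\xi\rangle+\langle\xi'\rangle)I$ is nondegenerate with positive imaginary part, so exact Gaussian integration at the complex critical point of $\Phi$ produces an effective phase $\Psi$ of the form \eqref{eq:Psi} whose imaginary part is bounded below by $c[\langle\xi\rangle\langle\xi'\rangle|x-x'|^2+|\xi-\xi'|^2]/(\langle\xi\rangle+\langle\xi'\rangle)$. The linear factor from $S$ contributes at most linear growth in $y-x'$, absorbed by the Gaussian, yielding
$$|K_{TS}(\alpha,\beta)|\leq C(\langle\xi\rangle\langle\xi'\rangle)^{N_0}\exp\Bigl(-\tfrac{c}{h}\Bigl[\tfrac{\langle\xi\rangle\langle\xi'\rangle|x-x'|^2}{\langle\xi\rangle+\langle\xi'\rangle}+\tfrac{|\xi-\xi'|^2}{\langle\xi\rangle+\langle\xi'\rangle}\Bigr]\Bigr).$$

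For \eqref{eq:TS} I reduce the $H^N$ target to $L^2$ by absorbing a prefactor $\langle\xi\rangle^N$ into the weighted kernel and apply Schur's test after splitting into two regimes. If $|\xi-\xi'|\leq\tfrac12(\langle\xi\rangle+\langle\xi'\rangle)$ then $\langle\xi\rangle\sim\langle\xi'\rangle$, so $e^{b\langle\xi\rangle-a\langle\xi'\rangle}$ is bounded once $b$ is a small enough multiple of $a$. If $|\xi-\xi'|\geq\tfrac12(\langle\xi\rangle+\langle\xi'\rangle)$, the Gaussian factor alone produces $\exp(-c'(\langle\xi\rangle+\langle\xi'\rangle)/h)$, dominating all polynomial prefactors and $e^{b\langle\xi\rangle}$ once $b<c'/h$. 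Compactness of the $x$-supports of $\chi,\chi_1$ makes the $x,x'$-integrations harmless, and Schur's test then yields the claimed $L^2\to L^2$ (hence $L^2\to H^N$) boundedness.

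For \eqref{eq:TS1}, the hypothesis $\chi_1\equiv 1$ on a conic neighborhood of $\supp\chi$ forces, on the support of the integrand, either $|x-x'|\geq c_0>0$ (in fact $|x-x'|\geq c_0(1+|x'|)$ once $x'$ leaves a bounded set, handling the non-compactness of $1-\chi_1$) or $x'$ bounded with $\xi,\xi'$ in conically separated open cones, so that $|\xi-\xi'|\geq c_0(\langle\xi\rangle+\langle\xi'\rangle)$. In either case the Gaussian produces exponential decay in $1+|x'|^2+\langle\xi\rangle+\langle\xi'\rangle$, which dominates $\langle\xi\rangle^{M+N+N_0}$ and $e^{b\langle\xi\rangle}$ for $b$ small, and Schur's test concludes.

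The delicate point is making the stationary-phase reduction to the pointwise Gaussian bound uniform across the unbalanced regime $\langle\xi\rangle\ll\langle\xi'\rangle$ (or vice versa). It is precisely for this reason that $\Psi$ in \eqref{eq:Psi} is normalized by $\langle\alpha_\xi\rangle+\langle\beta_\xi\rangle$ in the denominators: those denominators keep the lower bound on $\Im\Psi$ meaningful even when the two frequencies are very different. Once that bound is in place the Schur estimates are routine, and since the amplitude is identically one no analyticity of any coefficient is needed.
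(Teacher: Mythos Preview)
Your approach is the paper's: obtain a Gaussian pointwise bound on the kernel of $TS$ from the explicit phase \eqref{eq:Psi}, then apply Schur's test with the weights inserted. The kernel bound and the treatment of \eqref{eq:TS} are essentially what the paper does (one cosmetic point: to land in $H^N(\RR^{2n})$ you need bounds on $(x,\xi)$-derivatives of the kernel, not a prefactor $\langle\xi\rangle^N$; the paper records this as \eqref{eq:DalK}, and the same Gaussian integral gives it).

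There is, however, a real gap in your argument for \eqref{eq:TS1}. You assert that under either spatial separation $|x-x'|\ge c_0$ or conic separation of $\xi,\xi'$ the Gaussian produces decay $e^{-c(\langle\xi\rangle+\langle\xi'\rangle)}$. But with only spatial separation the spatial term of the Gaussian contributes
\[
\exp\!\Bigl(-\tfrac{c}{h}\,\tfrac{\langle\xi\rangle\langle\xi'\rangle}{\langle\xi\rangle+\langle\xi'\rangle}\,c_0^2\Bigr),
\]
and the harmonic mean controls only $\min(\langle\xi\rangle,\langle\xi'\rangle)$, not $\langle\xi\rangle+\langle\xi'\rangle$; similarly, conic separation of $\xi/\langle\xi\rangle$ from $\xi'/\langle\xi'\rangle$ does not by itself give $|\xi-\xi'|\gtrsim\langle\xi\rangle+\langle\xi'\rangle$. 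The paper closes this gap with the inequality \eqref{e:estimateMeNow},
\[
|\xi-\xi'|^2+\langle\xi\rangle\langle\xi'\rangle|x-x'|^2\ \ge\ \tfrac{\delta^2}{16}\,(\langle\xi\rangle+\langle\xi'\rangle)^2,
\]
proved by splitting on whether $|\langle\xi\rangle-\langle\xi'\rangle|/(\langle\xi\rangle+\langle\xi'\rangle)$ exceeds $\delta/4$: if it does, then $|\xi-\xi'|\ge\tfrac{\delta}{4}(\langle\xi\rangle+\langle\xi'\rangle)$ automatically from $|\langle\xi\rangle^2-\langle\xi'\rangle^2|\le|\xi-\xi'|\,(|\xi|+|\xi'|)$; if it does not, then $\langle\xi\rangle\sim\langle\xi'\rangle$, the harmonic mean is comparable to the sum, and either type of separation suffices. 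This short case analysis is exactly the missing step, and once you insert it your Schur argument goes through as written.
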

\begin{proof}
We analyse the Schwartz kernel of the operator in \eqref{eq:TS}, 
$ K ( x, \xi , y , \eta ) $. As in the proofs of 
\cite[Lemma 2.1, Proposition 4.5]{gaz1} (the phase of resulting operator
can be computed by completion of squares and is given by \cite[(4.10)]{gaz1} 
with $ \Lambda = T^* \RR^n $)
we see that
\begin{equation}
\begin{gathered} 
\label{eq:DalK} | ( h D)_{x,\xi}^\alpha K ( x, \xi , y , \eta ) |
\leq C_\alpha  
e^{  b \langle \xi \rangle - 
a \langle \eta \rangle - \psi ( x, \xi , y , \eta )  } , \\
\psi := 
c (\langle \xi \rangle + \langle \eta\rangle )^{-1} \left(  | \xi - \eta|^2 
 +  \langle \xi \rangle \langle \eta \rangle | x - y |^2 \right).
 \end{gathered}
 \end{equation}

We have 
\[ b < \tfrac 1 8 \min ( a, c) \ \Rightarrow \  b \langle \xi \rangle - a \langle \eta \rangle 
- c ( \langle \xi \rangle + \langle \eta \rangle )^{-1}  | \xi - \eta|^2 
\leq 
- \tfrac12 ( b \langle \xi \rangle + a \langle \eta \rangle ) , 
\]
if $ b $ is sufficiently small. (By taking $ b < a/8 $ we can assume that
$ |\eta|  \leq | \xi| /2 $. But then 
$ | \xi - \eta | \geq  \frac12  |\xi | $ and $ \langle \xi \rangle 
+ \langle \eta \rangle \leq 2\langle \eta \rangle $.) This proves
\eqref{eq:TS} as we can use the Schur criterion.

{To see \eqref{eq:TS1} we note that we can now assume that
$| \xi/\langle \xi \rangle - \eta / \langle \eta\rangle | > \delta$
 or $ | x- y |> \delta$. 
But then if the kernel of the operator in~\eqref{eq:TS1} is given by $K_M(x,\xi,y,\eta)$ where
$$
|(hD_{x,\xi})^\alpha K_N(x,\xi,y,\eta)|\leq C_{\alpha,N} e^{b\langle \xi\rangle -M\log \langle \eta\rangle -\psi(x,\xi,y,\eta)}.
$$
Now, fix $0<\delta<1$ small. Then, when $|\xi/\langle \xi\rangle -\eta/\langle \eta\rangle|>\delta$ or $|x-y|>\delta$, 
\begin{equation}
\label{e:estimateMeNow}
|\xi-\eta|^2 +\langle \xi\rangle \langle \eta\rangle |x-y|^2\geq \frac{\delta^2}{16} (\langle \xi\rangle +\langle \eta\rangle)^2.
\end{equation}
To see this, observe that on 
$$
\Big|\frac{\langle \xi\rangle -\langle \eta\rangle}{\langle \xi\rangle +\langle \eta\rangle}\Big|\geq \frac{\delta}{4},
$$
we have
$$
\frac{\delta}{4}\leq \Big|\frac{\langle \xi\rangle^2 -\langle \eta\rangle^2}{(\langle \xi\rangle +\langle \eta\rangle)^2}\Big|\leq \frac{|\xi-\eta|}{\langle \xi\rangle +\langle \eta\rangle}.
$$
On the other hand, when 
$$
\Big|\frac{\langle \xi\rangle -\langle \eta\rangle}{\langle \xi\rangle +\langle \eta\rangle}\Big|\leq \frac{\delta}{4},
$$
we have 
$$
\frac{2\langle \xi\rangle \langle \eta\rangle}{\langle \xi\rangle +\langle \eta\rangle}=\frac{\langle \xi\rangle+\langle \eta\rangle}{2}\Big(1-\Big[\frac{\langle \eta\rangle -\langle \xi\rangle}{\langle \xi\rangle +\langle \eta\rangle}\Big]^2\Big)\geq \frac{1}{4}(\langle \xi\rangle+\langle \eta\rangle)
$$
Therefore, if $|x-y|\geq \delta$,~\eqref{e:estimateMeNow} follows. If instead, $|\xi/\langle \xi\rangle -\eta/\langle \eta\rangle|\geq \delta$, then
$$
\frac{|\xi-\eta|}{\langle \xi\rangle +\langle \eta\rangle}\geq \frac{1}{2}\Big[\Big|\frac{\xi}{\langle \xi\rangle} -\frac{\eta}{\langle \eta\rangle}\Big|-\Big(\frac{|\xi|}{\langle \xi\rangle }+\frac{|\eta|}{\langle \eta\rangle}\Big)\Big|\frac{\langle \xi\rangle -\langle \eta\rangle}{\langle \xi\rangle +\langle \eta\rangle}\Big|\Big]\geq \frac{\delta}{4}
$$
and~\eqref{e:estimateMeNow} follows.

From~\eqref{e:estimateMeNow}, we have that there is $C_{M,\delta}>0$ such that if $|\xi/\langle \xi\rangle -\eta/\langle \eta\rangle|>\delta$ or $|x-y|>\delta$, 
\[ \begin{aligned}  b \langle \xi \rangle -& c( \langle \xi \rangle + \langle \eta\rangle )^{-1} \left(  | \xi - \eta|^2 
 +  \langle \xi \rangle \langle \eta \rangle | x - y |^2 \right)+M\log \langle \eta\rangle \\
   &\leq   b  \langle \xi \rangle  - \tfrac{1}{64}c\delta^2 (\langle \xi\rangle +\langle \eta\rangle)
    - \tfrac 12 c( \langle \xi \rangle + \langle \eta\rangle )^{-1} \left(  | \xi - \eta|^2 
 +  \langle \xi \rangle \langle \eta \rangle | x - y |^2 \right)+C_{M,\delta}, \end{aligned} \]
and the Schur criterion and gives 
\eqref{eq:TS1} for $b\leq \frac{c\delta^2}{64}.$} \end{proof}

\begin{proof}[Proof of Proposition \ref{p:awf}]
We start by recalling the characterization of the analytic wave front set using the standard FBI/Bargmann--Segal transform:
\[  \mathscr T u ( x, \xi ; h) := c_n h^{ -\frac {3n} 4 }
\int_{\RR^n } e^{ \frac i h ( \langle x - y , \xi \rangle  + 
\frac i 2 ( x - y)^2 ) }u(y)dy , \ \ \ u \in \mathscr S' ( \RR^n ) .\]
Then 
\begin{equation}
\label{eq:FBI_wf}
( x_0 , \xi_0 ) \notin \WFa ( u ) \ \Longleftrightarrow \ 
\left\{ \begin{array}{l} 
\exists \, \delta, \, U = {\rm{neigh}}((x_0, \xi_0)) \\
|\mathscr T u ( x, \xi , h ) | \leq C e^{ - \delta/h } , \  \  ( x, \xi ) 
\in U , \ \ 0 < h < h_0 . \end{array} \right.
\end{equation}
see \cite[Theorem 9.6.3]{H1} for a textbook presentation;  
note the somewhat different convention: $ \mathscr T u ( x, \xi;h ) 
= e^{ - \frac 1 {2h} \xi^2 } T_{1/h} u ( x - i \xi ) $.

We first prove \eqref{eq:awf}. Hence suppose that $ ( x_0, \xi_0 ) 
\in U \times \Gamma $. Let $ \chi \in S^{{0}} $ be supported in a small
conic neighbourhood, $ U_0 \times \Gamma_0 $, of $ ( x_0, \xi_0 ) $ and choose 
$ \chi_1 \in S^{{0}} $ which is supported in $ U \times \Gamma $
and is equal to $ 1 $ on a conic neighbourhood of the support of $ 
\chi $ {and $\chi_2\in S^0$ supported in $U \times \Gamma$ and equal to 1 on a conic neighborhood of the support of $\chi_1$}. Our assumptions then show that $ e^{ a \langle \xi \rangle /h } {\chi_2} Tu \in L^2 ( \RR^{2n} ) $ for some $ a > 0 $. 
We now write
\[  \chi e^{ b \langle \xi \rangle } T u = 
\chi e^{ b \langle \xi \rangle } T S \left(  \chi_1 e^{ - a \langle \xi 
\rangle  } e^{  a \langle \xi \rangle } {\chi_2}Tu + 
( 1 - \chi_1 ){\langle \xi\rangle^{N}\langle \xi\rangle^{-N}} T u \right) . \]
Since $u\in H^{-N}$, $ {\langle \xi\rangle^{-N}}T u \in L^2 ( \RR^{2n } ) $ and
  \eqref{eq:TS}, \eqref{eq:TS1},  now
show that $ e^{ b \langle \xi \rangle} \chi T u \in H^K $ 
for some $ b > 0 $ and any $ K $. By taking $ K > n $ and applying
\cite[Corollary 7.9.4]{H1} we obtain 
a uniform bound 
\[    |T u ( x, \xi )| \leq  C e^{ - b \langle \xi \rangle } , \ \
( x, \xi ) \in U_0 \times \Gamma_0. \]
Let $ h_1 $ be the fixed $ h $ in the definition of $ T $. 
Then, 
\begin{equation} 
\label{eq:T2T}
\mathscr T ( x, \xi/\langle \xi \rangle; 
h_1/\langle \xi \rangle )  = T u ( x, \xi ) = 
\mathcal O ( e^{ - b \langle \xi \rangle } ), \ \ ( x, \xi ) \in 
U_0 \times \Gamma_0 
. \end{equation}
Putting $ \omega_0 := \xi_0/\langle \xi_0 \rangle $, it follows that
$ \mathscr T ( x, \omega , h ) = \mathcal O ( e^{ - \delta/h } ) $ 
for $ ( x, \omega ) $ in a small neighbourhood of $ ( x_0 , \omega_0 ) $.
But then \eqref{eq:FBI_wf} shows that $ ( x_0, \omega_0 )  \notin 
\WFa ( u ) $. Since $ \WFa ( u ) $ is a closed conic set, 
we conclude that $ ( x_0, \xi_0 ) \notin \WFa ( u ) $.

Now suppose that $ \WFa ( u ) \cap ( U \times \Gamma ) = \emptyset $.
Then for $ ( x, \omega ) $ near $ U_0 \times (\Gamma_0 \cap \mathbb S^{n-1}) $ (with $ U_0 $ and $ \Gamma_0 $, 
as in the statement of the theorem), 
$ \mathscr T ( x, \omega, h ) = \mathcal O ( e^{ - \delta / h } ) $. 
Reversing the argument in \eqref{eq:T2T} we see that
\[  | T u ( x, \xi ) | \leq C e^{ - b \langle \xi \rangle } , \ \ 
( x, \xi ) \in U_0 \times \Gamma_0 . \]
{Now, since $u\in H^{-N}(\mathbb{R}^n)$, $\langle \xi\rangle^{-N}Tu\in L^2(\mathbb{R}^{2n})$. }In particular, since $ | \psi | \leq C \langle \xi \rangle $ and the support of 
$ \psi $ is contained in $ U_0 \times \Gamma_0 $, \eqref{eq:cawf}
follows.
\end{proof}

The next proposition relates weighted estimates to deformed FBI
transform: 
\begin{prop}
\label{p:weig2def}
Suppose that $ H_\Lambda $, $ \Lambda = \Lambda_G $,
is defined in \cite[(4.7)]{gaz1} with $ G $ satisfying
\eqref{e:LambdaDef}
with $ \epsilon_0 $ chosen as in the definition of $ H_\Lambda $.

Then there exists $ \psi \in S^1 ( T^* \RR^n ) $
such that $ T : \mathscr B_\delta \to L^2 ( T^* \RR^n , e^{  \delta \langle
\xi \rangle /Ch } d x d \xi  ) $ extends to 
\begin{equation}
\label{eq:H2L2}   T =  \mathcal O ( 1 ) : H_\Lambda \to L^2 ( T^* \RR^n , e^{2 \psi ( x,\xi)/h } d x d \xi ) , \end{equation}
and $ S : L^2 ( T^* \RR^n , e^{ {-C} \delta \langle
\xi \rangle /h } d x d \xi  )  \to \mathscr B_\delta $, extends to 
\begin{equation}
\label{eq:L22H}
S = \mathcal O ( 1 ) : L^2 ( T^* \RR^n , e^{2 \psi ( x,\xi)/h } d x d \xi )
\to H_\Lambda .
\end{equation}
In addition, 
\begin{equation}
\label{eq:weig2def}  \psi ( x, \xi ) = G ( x, \xi ) + 
\mathcal O ( \epsilon_0^2 )_{ S^1 ( T^* \RR^n ) } .
\end{equation}
\end{prop}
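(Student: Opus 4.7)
The plan is to compare the $H_\Lambda$-norm of $u$, defined via $T_\Lambda u$ on $\Lambda$ with weight $e^{-2H/h}$, to a weighted $L^2$-norm of $Tu$ on $T^*\RR^n$ with weight $e^{2\psi/h}$, and to read off $\psi$ from a direct expansion of the phase of $Tu$ at the deformed argument. The identity $T_\Lambda u(x,\xi) = Tu(x - iG_\xi, \xi + iG_x)$ from \eqref{eq:TLa} reduces everything to a computation at the level of the oscillatory integral defining $Tu$.

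The key pointwise computation is as follows. The phase appearing in the integral for $T_\Lambda u(x,\xi)$ is
\[ \Phi = \langle x - iG_\xi - y,\, \xi + iG_x\rangle + \tfrac{i}{2}\langle \xi + iG_x\rangle (x - iG_\xi - y)^2. \]
Expanding $\langle \xi+iG_x\rangle$ and collecting terms in powers of $G$ gives
\[ -\Im\Phi = \xi\cdot G_\xi - \langle x-y,\, G_x\rangle - \tfrac12 \langle\xi\rangle (x-y)^2 + \mathcal{O}(\epsilon_0^2 \langle\xi\rangle). \]
Since $H = \xi\cdot G_\xi - G$, the weight $e^{-H/h}$ in $\|u\|_{H_\Lambda}$ combines with the constant-in-$y$ factor $e^{(\xi\cdot G_\xi)/h}$ extracted from $e^{-\Im\Phi/h}$ to leave an overall $e^{G/h}$. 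The linear-in-$y$ contribution $-\langle x-y, G_x\rangle/h$ shifts the Gaussian stationary point in $y$ by $\mathcal{O}(\epsilon_0)$, and together with the quadratic-in-$G$ pieces of the phase it produces corrections of order $\mathcal{O}(\epsilon_0^2)_{S^1(T^*\RR^n)}$. This already forces \eqref{eq:weig2def}.

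To promote this pointwise statement to the operator bounds \eqref{eq:H2L2} and \eqref{eq:L22H}, I would use that $(x,\xi)\mapsto(x - iG_\xi, \xi + iG_x)$ is a diffeomorphism from $\RR^{2n}$ onto $\Lambda$ whose real Jacobian is bounded away from $0$ and $\infty$, uniformly in $\xi$, by \eqref{e:LambdaDef}. Rewriting
\[ \|u\|_{H_\Lambda}^2 = \int_{\RR^{2n}} |T_\Lambda u(x,\xi)|^2\, e^{-2H(x,\xi)/h}\, J(x,\xi)\, dx\,d\xi \]
and applying the pointwise comparison above yields a quantity equivalent (uniformly in $u$) to $\|e^{\psi/h}Tu\|_{L^2(T^*\RR^n)}^2$, modulo lower-order terms that are absorbable. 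For $u\in\mathscr{B}_\delta$ both norms are finite, because $T$ already maps $\mathscr{B}_\delta$ into $L^2(e^{\delta\langle\xi\rangle/Ch}\,dxd\xi)$ and $|\psi|\leq C\langle\xi\rangle$ so $e^{2\psi/h}$ is dominated by this weight. Density of $\mathscr{B}_{\delta_0}$ in $H_\Lambda$ via \eqref{eq:HLa} then gives the extension \eqref{eq:H2L2}. The bound \eqref{eq:L22H} for $S$ follows by the symmetric argument applied to the exact left inverse $S_\Lambda$ of $T_\Lambda$, or by pairing with \eqref{eq:H2L2}.

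The main obstacle is checking that the error term in \eqref{eq:weig2def} is genuinely $\mathcal{O}(\epsilon_0^2)$ in the $S^1$ topology and not just pointwise. The cancellation of the first-order-in-$G$ imaginary part of $\Phi$ against the weight $H$ relies critically on the specific form $H = \xi\cdot G_\xi - G$; any other choice would leave an uncontrolled linear-in-$\epsilon_0$ contribution. A secondary difficulty is the non-holomorphic factor $\langle\xi\rangle$ in the phase: when extended as $\langle\xi + iG_x\rangle$ it introduces further $\mathcal{O}(\epsilon_0)$ imaginary parts whose cumulative effect on the shifted stationary point and on the derivatives of $\psi$ must be controlled, using the full strength of the two-derivative smallness condition on $G$ in \eqref{e:LambdaDef}.
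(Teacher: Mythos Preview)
Your phase expansion of $\Phi$ is correct and identifies the right leading term, but the step ``applying the pointwise comparison above yields a quantity equivalent to $\|e^{\psi/h}Tu\|_{L^2}^2$'' is a genuine gap. After extracting the $y$-independent factor $e^{\xi\cdot G_\xi/h}$ from the integrand of $T_\Lambda u(x,\xi)$, the remaining $y$-integral is \emph{not} $Tu(x,\xi)$: its real phase and its Gaussian centre are both shifted, and there is no multiplicative weight in $(x,\xi)$ alone that converts one into the other. Nor can you invoke a ``shift of the Gaussian stationary point'' in $y$, because the $y$-integral is against an arbitrary $u\in\mathscr S'$, so neither stationary phase nor a contour deformation in $y$ is available. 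In short, $|T_\Lambda u(x,\xi)|^2 e^{-2H/h}$ and $|Tu(x,\xi)|^2 e^{2\psi/h}$ are simply not pointwise comparable.

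The paper avoids this by working at the operator level. One writes $Tu=(TS_\Lambda)\,T_\Lambda u$ and analyses the Schwartz kernel of $TS_\Lambda$: here the $y$-integral is a pure Gaussian (no $u$ involved) and can be evaluated exactly by completing the square, yielding the phase $\Psi(\alpha,\beta)$ of \eqref{eq:Psi} with $\alpha\in T^*\RR^n$, $\beta\in\Lambda$. The weight is then \emph{defined} by the Legendre-type transform
\[
\psi(\alpha):=\max_{\beta\in\Lambda}\bigl(-\Im\Psi(\alpha,\beta)+H(\beta)\bigr),
\]
after which a Schur estimate gives \eqref{eq:H2L2}; locating the critical $\beta_c(\alpha)=\alpha+\mathcal O(\epsilon_0)_{S^0\times S^1}$ and expanding gives \eqref{eq:weig2def}. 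The bound \eqref{eq:L22H} follows from the same analysis applied to $T_\Lambda S$, together with a check that the two resulting weights coincide. Your heuristic points at the right answer, but the missing idea is to compare $T$ and $T_\Lambda$ through the kernel of $TS_\Lambda$ rather than pointwise in $u$.
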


For a simpler version of this result in the case of compactly supported weights see \cite[\S 8]{gaz0}.

\begin{proof}
The statement \eqref{eq:H2L2} is equivalent to 
\[  T S_\Lambda = \mathcal O ( 1 ) :  L^2 ( \Lambda , e^{ - 2 H (\alpha)/h } d \alpha ) \to L^2 ( T^* \RR^n , e^{ 2 \psi ( \beta ) } d\beta )
\]
and hence we analyse the kernel of the operator  $ T S_\Lambda $ which
is given by  
\begin{gather*} K ( \alpha, \beta ) = 
c_n h^{ - \frac {3n} 2} 
\int_{\RR^n} 
e^{ \frac i h ( \varphi_0 ( \alpha, y ) + \varphi_G^* ( \beta, y ) )}
\langle \beta_\xi  \rangle^{\frac n 4} \langle \alpha_x \rangle^{\frac n 4}
( 1 + \tfrac i 2 \langle \alpha_x - y \rangle ) dy , 
\end{gather*}
where the notation (and also notation for $ \Phi $ below) comes from 
\eqref{eq:lost}.
The integral in $ y $ converges and can be evaluated by a completion of
squares as in \cite[Proposition 4.4]{gaz1}. That gives the phase
\eqref{eq:Psi} with
$ \alpha \in T^* \RR^n $ and $ \beta \in \Lambda $.
The critical point in $ y $ is given by 
\begin{equation}
\label{eq:ycr}  y_c ( \alpha, \beta ) = \frac 1 {\langle \alpha_\xi\rangle +\langle \beta_\xi\rangle} \left( \langle \alpha_\xi \rangle \alpha_x + 
\langle \beta_\xi \rangle \beta_x + i ( \beta_\xi - \alpha_\xi ) \right) .\end{equation}

We then have \eqref{eq:H2L2} with 
\begin{equation}
\label{eq:psia}  \psi ( \alpha ) :=  \max_{ \beta \in \Lambda } \left( - \Im 
\Psi ( \alpha, \beta ) + H ( \beta ) \right) . \end{equation}
We have (see \cite[(3.3),(3.4)]{gaz1})
\[  d_\beta ( - \Im  \Psi ( \alpha, \beta ) + H ( \beta ) ) = 
 \Im ( -  \partial_{z,\zeta}  \Psi ( \alpha, (z,\zeta)  )  - 
 \zeta d z |_{ \Lambda } )|_{ (z , \zeta ) = \beta \in \Lambda } . \]
Now, if $ y_c ( \alpha, (z,\zeta) ) $ is the critical point in $ y $, then
\[ \begin{split} \partial_{z,\zeta}  \Psi ( \alpha, z  ) & = \partial_{ z, \zeta }
( \Phi ( \alpha, y_c ( \alpha, (z,\zeta) ) ) - \bar \Phi ( (z,\zeta), y_c ( \alpha , (z,\zeta) ) ))
= -\partial_{z , \zeta } \bar \Phi\big|_{y=y_c(z,\zeta)}(z,\zeta) \\
& = 
- \zeta \cdot d z + ( y_c - z ) \cdot d \zeta + i
\langle \zeta \rangle ( z - y_c ) \cdot dz + \tfrac i 2 ( z - y_c)^2 \zeta
\cdot d \zeta / \langle \zeta \rangle . 
\end{split} 
\]
For $ G = 0$ the critical point (see \eqref{eq:ycr}) is given by 
$ \alpha = \beta $. Hence 
\begin{equation}
\label{eq:betac} \beta_c = \beta_c ( \alpha ) = \left( \alpha_x + \mathcal O ( \epsilon_0 )_{S^0} , \alpha_\xi + \mathcal O ( \epsilon_0 )_{S^1 }
\right) , \end{equation}
with $ \epsilon_0 $ as in \eqref{e:LambdaDef}.

Hence we obtain $ \psi $ by inserting the critical point 
$ \beta_c  $ into the right hand side of
\eqref{eq:psia}
\begin{equation}
\label{eq:psiak}  \psi ( \alpha ) = - \Im \Psi ( \alpha, \beta_c ( \alpha ) ) + H ( \beta_c ( \alpha ) ) \in S^1 ( T^* \RR^n )  . 
\end{equation}
(We note that for $ G = 0 $ the maximum in \eqref{eq:psia} is non-degenerate and unique and it remains such under small symbolic perturbations.) From \eqref{eq:Psi} we see that
\[  \Im \Psi ( \alpha, \beta_c ( \alpha ) ) = \Im \Psi ( \alpha, 
\alpha + \mathcal O ( \epsilon_0 )_{ S^0 \times S^1 } ) = 
\alpha_\xi \cdot G_\xi ( \alpha ) + \mathcal O ( \epsilon_0^2 )_{ S^1} .\]
Inserting this into \eqref{eq:psiak} and recalling that $ 
H = \xi G_\xi -G$ we obtain \eqref{eq:weig2def}.

To obtain \eqref{eq:L22H} we apply the same analysis to $ T_\Lambda S $
and we need to show that two weights coincide. That is done as in 
\cite[\S 8]{gaz0}.
\end{proof}



\section{Proof of Theorem \ref{t:2}}

As already indicated in \S \ref{s:idea}, to prove the theorem we construct a family of weights $ G_\epsilon \in S^1 $,
uniformly bounded in $ S^1 $, supported in a conic neighbourhood of $ \Gamma = \{ ( 0 , 0 , \xi_1 , 0 ) : 
\xi_1 > M \} $, $ M \gg 1 $,  and  
satisfying $ 0 \leq G_\epsilon \leq C_\epsilon \log \langle \xi \rangle $.
In addition, 
\begin{equation}
\label{eq:propG0}  H_p G_\epsilon \geq 0  , \  \ \  
\text{  $ G_\epsilon \to \xi_1 $ on $ \Gamma $ (in $ S^{1+} $)}, 
\end{equation} 
with $ H_p G_\e \gg \xi_1^{m-1} $ in a suitable sense (see \eqref{eq:propHpG3})
for $ \e \ll 1 $.

We will then put $ \Lambda_\epsilon := \Lambda_{G_\epsilon } $ so that
the assumption $ u \in \CI $ will give 
$ u \in H_{\Lambda_\epsilon } $. On the other hand the assumption that
$ \Gamma \cap \WFa ( Pu ) $ shows that $ 
\| Pu \|_{ H_{\Lambda_\epsilon } } \leq C $ with the constant $ C $ independent of $ \e $. But then \cite[Proposition 6.2]{gaz1} and the 
properties of $ G_\epsilon $ show that $ \| u \|_{H_{\Lambda_\epsilon}} $ is bounded independently of $ \epsilon $. Propositions \ref{p:awf}
and \ref{p:weig2def} then show that $ \WFa ( u ) \cap \Gamma_0 = \emptyset $. 

\subsection{Construction of the weight}
We now construct a family of weights, $ G_\epsilon $, satisfying 
\eqref{eq:propG0}. In fact, we need more precise conditions on 
$ G_\epsilon $ given in the following 

\begin{lemm}
\label{l:G}
Suppose that $ p $ satisfies \eqref{eq:hypP} at $ \rho_0 = (x_0, \xi_0 )
\in T^* \RR^n \setminus 0  $ {and $\Gamma$ is an open conic neighbourhood of $\rho_0$}. 
Then, there exists $ G_\epsilon \in S^1 ( T^*\RR^n ) $, $ \supp G_\epsilon \subset 
 \Gamma $, such that 
\begin{equation}
\label{eq:propHpG1} 
\begin{gathered}  
| \partial_x^\alpha \partial_\xi^\beta  G_\epsilon | \leq C_{\alpha \beta}
\langle \xi \rangle^{ 1 - |\beta| } , \ \  
0 \leq G_\epsilon \leq C \epsilon^{-1} \log \langle \xi \rangle, 
\\   G_\epsilon ( x, \xi ) |_{ 1 \leq |\xi| \leq 1/\epsilon } = \Phi ( x, \xi ) |\xi| , 
 \ \  \Phi \in S^0_{\rm{phg}} ( T^* \RR^n ) , \ \ 
 \Phi ( x_0, t \xi_0 ) = 1 , \  t \gg 1, 
\end{gathered}
\end{equation} 
\begin{equation}
\label{eq:propHpG2} 
{ H_p G_\e ( x , \xi) \geq c_0  
\left( 
 \langle \xi \rangle^m |\partial_\xi G_\e ( x, \xi ) |^2 +
 \langle \xi \rangle^{m-2} |  \partial_x G_\e (x, \xi)|^2 \right) , }
\end{equation}
\begin{equation}
\label{eq:propHpG3} { \forall \, M_1, \, \gamma \geq 0 \, \, \exists \, M_2, \, K,  \, \epsilon_0
\, \forall \, 0 < \epsilon < \epsilon_0 , \ \ \   H_p G_\epsilon e^{ \gamma G_\epsilon} + M_2\langle \xi \rangle^{K } \geq M_1 
\langle \xi \rangle^{m-1} e^{ \gamma G_\epsilon }} .
\end{equation}
\end{lemm}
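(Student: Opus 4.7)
I will construct $G_\epsilon$ in the product form
\begin{equation*}
G_\epsilon(x,\xi) \,=\, \Phi(y)\,\psi_\epsilon(\xi_1),
\end{equation*}
where $\xi_1$ is a radial coordinate along $\Lambda$, $y$ is a collection of homogeneous-degree-zero transverse coordinates vanishing on $\Lambda$, $\Phi$ is a compactly supported transverse cutoff, and $\psi_\epsilon$ is exactly the one-dimensional profile introduced in \S\ref{s:idea}:
\begin{equation*}
\psi_\epsilon(t) \,=\, (1-\chi(t))\!\int_0^t\!\big(\chi(\epsilon s) + (1-\chi(\epsilon s))(\epsilon s)^{-1}\big)\,ds,
\end{equation*}
linear on $[1,\epsilon^{-1}]$ and $\sim \epsilon^{-1}\log t$ beyond, with $\psi_\epsilon'\geq 0$. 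With this structure, the derivative bounds $|\partial^\alpha_x\partial^\beta_\xi G_\epsilon|\leq C_{\alpha\beta}\langle\xi\rangle^{1-|\beta|}$, the pointwise logarithmic cap, and the homogeneous middle-range formula demanded by \eqref{eq:propHpG1} are immediate.

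\textbf{Coordinates and transverse linearization.} A conic canonical transformation reduces us to $\rho_0 = (0,e_1)$ and $\Lambda = \{x_1=0,\,\xi'=0,\,\xi_1>0\}$, with $y = (x_1,\xi'/\xi_1)\in\RR^n$ as degree-zero transverse coordinates. Since $\Lambda$ is invariant under the Hamilton flow by the radial assumption, $H_p y|_\Lambda = 0$ and
\begin{equation*}
H_p y \,=\, \xi_1^{m-1} B y + O(\xi_1^{m-1}|y|^2),
\end{equation*}
with $B$ a matrix whose Jordan/spectral structure is pinned down by the Guillemin--Schaeffer--Sternberg--Haber normal form \cite{stern,gus,hab}. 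The key linear-algebra step is then to produce, with $B$ in canonical form, a positive-definite quadratic form $Q(y) = \langle Ay,y\rangle$ for which $H_p Q$ has a definite sign compatible with a monotone cutoff $\chi_0'\leq 0$. I put $\Phi(y) := \chi_0(Q(y)/\delta^2)$ with $\chi_0\in C_c^\infty([0,1);[0,1])$, $\chi_0\equiv 1$ on $[0,1/2]$, $\chi_0'\leq 0$, and $\delta$ small enough that $\supp G_\epsilon\subset\Gamma$.

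\textbf{Positivity \eqref{eq:propHpG2}.} A direct computation gives
\begin{equation*}
H_p G_\epsilon \,=\, \Phi\,\psi_\epsilon'(\xi_1)\,H_p\xi_1 \,+\, \psi_\epsilon(\xi_1)\,\delta^{-2}\chi_0'(Q/\delta^2)\,H_p Q \,+\, R,
\end{equation*}
with $R$ an $O(|y|^2)$ remainder. The first term is bounded below by $c\Phi\psi_\epsilon'(\xi_1)\xi_1^m$ via positive radiality ($H_p\xi_1 = c\xi_1^m + O(\xi_1^{m-1}|y|)$, $c>0$), which matches the contribution $c_0\langle\xi\rangle^m|\partial_\xi G_\epsilon|^2$ coming from $\psi_\epsilon'$ on both the linear and logarithmic ranges. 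The second term contributes positively in the transition region $\chi_0'\neq 0$ by the sign choice in the construction of $Q$ and dominates the transverse part $c_0\langle\xi\rangle^{m-2}|\partial_x G_\epsilon|^2\sim \delta^{-4}\chi_0'(Q/\delta^2)^2|\partial_x Q|^2\psi_\epsilon^2$ by a standard positive-commutator bookkeeping, using that on $\supp\chi_0'$ one has $|y|^2\sim \delta^2$ and the comparison $|\chi_0'|^2\lesssim |\chi_0'|$ for a well-chosen $\chi_0$. The remainder $R$ is absorbed by taking $\delta$ small.

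\textbf{Growth \eqref{eq:propHpG3} and main obstacle.} On the core region $\{\Phi\geq 1/2,\,\xi_1\geq 1\}$ the above yields $H_p G_\epsilon\gtrsim \langle\xi\rangle^{m-1}$, and multiplication by $e^{\gamma G_\epsilon}$ directly produces the lower bound required in \eqref{eq:propHpG3}. On the transition region $\supp\chi_0'$, the logarithmic cap gives $e^{\gamma G_\epsilon}\leq \langle\xi\rangle^{C\gamma/\epsilon}$, while $|H_p G_\epsilon|\leq C\langle\xi\rangle^{m-1}\psi_\epsilon\leq C\langle\xi\rangle^m$, so the worst-case negative contribution is at most $\langle\xi\rangle^K$ with $K=K(\gamma,\epsilon,m)$ and is absorbed into $M_2\langle\xi\rangle^K$. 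The main technical obstacle in the whole argument is the second paragraph: producing a positive-definite $Q$ adapted to the transverse linearization $B$ so that $H_p Q$ has the sign compatible with $\chi_0'\leq 0$. This is precisely where the Haber--Guillemin--Schaeffer--Sternberg normal form is indispensable, as without canonical control of $B$ one cannot guarantee the existence of a suitable Lyapunov-type adapted quadratic form $Q$, and this step is what upgrades the $C^\infty$ radial-point theory of \cite{hab,hv} to a form supporting the sharper analytic-wavefront estimate.
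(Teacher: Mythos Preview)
Your overall architecture—product form $G_\epsilon = \Phi \cdot \psi_\epsilon(\xi_1)$ with the one-dimensional profile from \S\ref{s:idea}—matches the paper's, but two points need correction.

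\textbf{The normal form does more than you use.} You invoke the normal form only to linearize the transverse flow, writing $H_p y = \xi_1^{m-1} B y + O(\xi_1^{m-1}|y|^2)$, and then frame the construction of a Lyapunov quadratic $Q$ adapted to $B$ as ``the main technical obstacle.'' In fact Haber's normal form \cite{hab} is stronger: it gives $p = -\xi_1^m x_1$ \emph{exactly} in a conic neighbourhood, so (taking $m=1$) $H_p = \xi_1\partial_{\xi_1} - x_1\partial_{x_1}$ with no remainder. There is then no matrix $B$ to analyse; the paper simply takes $\Phi$ as a product of one-dimensional cutoffs $\varphi(x_1)\,\varphi(|\xi'|/\xi_1)\,\varphi(|x'|)\,(1-\varphi((\xi_1)_+))$ with $t\varphi'(t)\leq 0$, and $H_p\Phi\geq 0$ is a one-line computation. (Your $y=(x_1,\xi'/\xi_1)$ also omits $x'$, so $\supp G_\epsilon$ would not lie in a conic neighbourhood of the \emph{point} $\rho_0$.) Inequality \eqref{eq:propHpG2} then follows from the elementary bound $\Phi\geq c\,(\xi_1^2|\partial_\xi\Phi|^2 + |\partial_x\Phi|^2)$ valid for any nonnegative $\Phi\in S^0$, combined with $\xi_1 q_\epsilon' \geq c\,\xi_1^{-1} q_\epsilon^2$; no sign bookkeeping on $\supp\chi_0'$ and no comparison $|\chi_0'|^2\lesssim|\chi_0'|$ is required.

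\textbf{Your argument for \eqref{eq:propHpG3} has a genuine gap.} The quantifiers are $\forall\, M_1,\gamma\ \exists\, M_2,K,\epsilon_0\ \forall\,\epsilon<\epsilon_0$, so $K$ must be independent of $\epsilon$; but you produce $K = K(\gamma,\epsilon,m)$ from the crude bound $e^{\gamma G_\epsilon}\leq\langle\xi\rangle^{C\gamma/\epsilon}$, which on the transition region forces $K\gtrsim\epsilon^{-1}$. The paper's argument exploits the product structure $G_\epsilon = \Phi\,q_\epsilon$ more carefully. For $\xi_1\geq 1/\epsilon$ one has $H_p G_\epsilon \geq \Phi\,\xi_1^{m}q_\epsilon' \geq \epsilon^{-1}\Phi\,\xi_1^{m-1}$, and one applies the elementary inequality $a e^{ab}+M_1 e^{M_1 b}\geq M_1 e^{ab}$ with $a=\epsilon^{-1}\Phi$ and $b=\gamma\epsilon q_\epsilon\leq\gamma(2+\log\xi_1)$. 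The compensating term $M_1 e^{M_1 b}$ is then bounded by $M_1 e^{2\gamma M_1}\xi_1^{\gamma M_1}$, giving $K=\gamma M_1+m-1$ independent of $\epsilon$. The key point you miss is that where $\Phi$ is small, $G_\epsilon=\Phi q_\epsilon$ is itself small relative to $\log\xi_1$, \emph{uniformly in $\epsilon$}; discarding the factor $\Phi$ in the exponent is exactly what breaks the uniformity.
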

We stress that the constants $ C_{\alpha \beta} $ and $ c_0 $ are independent of 
$ \epsilon $ and $ M_1 $.
\begin{proof} We use the normal form for $ p $ constructed in 
\cite[\S 3]{hab}. That means that we take $ x_0 = 0 $ and $ \xi_0 = e_1 := (1, 0 , \cdots, 0 ) $ and 
 can assume that $ p ( x, \xi ) = -  
\xi_1^{ m} x_1 $ in a conic neighbourhood of $ \rho = ( 0 , e_1 ) $.
For simplicity we can assume that $ m = 1 $ as the argument is the same otherwise.

Let $\chi \in C_c^\infty( \RR ;[0,1])$ satisfy
\begin{equation}
\label{eq:defchi}  \supp \chi \subset [ - 2,2 ] , \ \ 
 \chi_{ |t | \leq 1 } = 1, \ \  
 t\chi'( t )\leq 0. \end{equation}
and put $ \varphi ( t ) := \chi ( t/\delta ) $. 
Here $ \delta $ will be fixed depending on $ \Gamma $. Using this function we define
$ \Phi = \Phi ( x, \xi ) := 
\varphi_1 \varphi_2 \varphi_3 \psi  $ where 
\begin{equation}  
\label{eq:defph}
\varphi_1 := \varphi ( x_1 ) , \ \ 
\varphi_2 := \varphi( |\xi'|/\xi_1  ) 
\ \ \varphi_3 = \varphi ( |x'| ) , \ \ 
\psi :=  ( 1 - \varphi ( (\xi_1)_+  ) ) . 
\end{equation}
We choose $ \delta $ small enough so that $ \supp \Phi \subset \Gamma $.

We define $ G_\e $ as follows
\begin{equation}
\label{eq:defG}
G_{\e} ( x, \xi ) =\Phi ( x, \xi )  q_\e ( \xi_1 )   , \ \ \ 
q_\e (t) := \int_0^t \left( \chi( \epsilon s )  + ( 1 - \chi ( \epsilon s ))
( s \epsilon )^{-1} \right)  ds .
\end{equation} 
We check that 
\begin{equation}
\label{eq:q123}
\begin{gathered}
\xi_1 \partial_{\xi_1} q_\e \geq \min ( \xi_1, \e^{-1} ), \\
 \xi_1 \indic_{\xi_1 \leq 1/\epsilon } + 
\epsilon^{-1}   ( 1 + \log ( \epsilon \xi_1  ) ) \indic_{\xi \geq 1/\epsilon }  \leq q_\epsilon\leq \xi_1 \indic_{\xi_1 \leq 1/\epsilon } + 
\epsilon^{-1}  ( 2 + \log ( \epsilon \xi_1  ) )\indic_{\xi \geq 1/\epsilon }   .
\end{gathered}
\end{equation}

Uniform boundedness of $G_{\e}$ in $S^1$ means that $ q_\e$ in \eqref{eq:defG} satisfies $ 
| \partial^k_{\xi_1 } q_\e | \leq C_k \xi_1^{1-k} $ with $ C_k$'s independent of $ \e$. But this is immediate from the definition. 
We also easily see that $ G_\e $ converges to $G :=  \Phi ( x, \xi ) \xi_1 $ 
 in $S^{1+}$ as $ \e \to 0 $. This proves \eqref{eq:propHpG1}.

To see \eqref{eq:propHpG2}, we first note that, since $ \Phi \geq 0$, 
$ \Phi \in S^0 $, 
the standard estimate $ f ( z) \geq 0 \Longrightarrow |d  f( z )|^2 \leq
C f ( z ) $ gives,
\begin{equation}
\label{eq:Phineq} 
\Phi ( x, \xi ) \geq c_1 \left(
\xi_1^2 | \partial_\xi \Phi ( x , \xi)|^2 + | \partial_x 
\Phi ( x , \xi )|^2 \right) . 
\end{equation}
Note also that we have $ H_p =    \xi_1 \partial_{ \xi_1 } -                                   
x_1 \partial_{x_1} $ and therefore
\begin{equation}
\label{e:HpPhi}  H_p \Phi =  - x_1 \varphi' ( x_1) \varphi_2 \varphi_3 \psi -
 ( |\xi'|/\xi_1 ) \varphi' ( |\xi'|/\xi_1 ) \varphi_1 \varphi_3 \psi
- \varphi_1 \varphi_2 \varphi_3 \xi_1 \varphi' ( (\xi_1)_+ )  \geq 0. \end{equation}
{Since $ q_\epsilon \in S^1 $,  $ \xi_1 \partial_{\xi_1 } q_\e ( \xi_1 )  \geq c_2 
 \xi_1 ( \partial_{\xi_1 } q_\e( \xi_1 ) )^2  $. We also claim that 
\begin{equation}
\label{eq:xi1q}  \xi_1 \partial_{\xi_1 } q_\e ( \xi_1 )  
\geq 
 c_2 \xi_1^{-1} q_\e ( \xi_1 )^2 . \end{equation}
In fact, using \eqref{eq:q123} we see  that  {to prove} \eqref{eq:xi1q}  {it is enough to have} 
\[  \min ( t, \epsilon^{-1} ) \geq  {c_2} 
t^{-1} \left( t \indic_{ t \leq 1/\epsilon } ( t )  + 
\epsilon^{-1} (  {2} + \log ( t \epsilon ) ) \indic_{ t \geq 1/\epsilon  }
(t) \right)^2 . \]
This clearly holds (with $c_2=1$) for $ t \leq 1/\epsilon $ and for $ t \geq \epsilon $
is equivalent to
$ {c_2} (  {2} + \log s )^2 \leq  s  $, $ s = t \epsilon \geq 1 $, 
which holds with $  {c_2} = { \frac 1 4} $.
It follows that
\[  \xi_1 \partial_{\xi_1 } q_\e ( \xi_1 )  \geq 
c_2 \left( \xi_1^{-1} q_\e ( \xi_1 ) ^2 + \xi_1 ( \partial_{\xi_1 } q_\e( \xi_1 ) )^2 \right), \]
 which combined with \eqref{eq:Phineq} and~\eqref{e:HpPhi} gives
\[ \begin{split} H_p G_\e & = \Phi ( \xi_1 \partial_{\xi_1 } q_\e ) + 
(H_p \Phi) q_\e \\
& \geq \Phi ( \xi_1 \partial_{\xi_1 } q_\e ) 
\geq c_2  \xi_1 \Phi ( \partial_{\xi_1 } q_\e )^2 + 
c_3 \left( \xi_1^2 | \partial_\xi \Phi |^2 + | \partial_x 
\Phi|^2 \right) \xi_1^{-1} q_\e^2 \\
& \geq c_0 \left( \xi_1 |\partial_\xi G_\e|^2 + 
\xi_1^{-1} | \partial_x G_\e|^2 \right).
\end{split}
\]
Since $ \langle \xi \rangle \sim \xi_1 $ on the support of $ G_\e $, 
we obtain  \eqref{eq:propHpG2}.}

Finally we prove \eqref{eq:propHpG3}. Since by~\eqref{e:HpPhi} we have $H_pG_\e \geq \Phi H_pq_\e$, we see that \eqref{eq:propHpG3} follows from proving that for any $ M_1 $ we can find 
$ K $, $ M_2 $ and $ \epsilon_0  $ such that for $ \xi_1 \geq 1 $, 
\begin{equation}
\label{eq:PhiHp} \Phi H_p q_\epsilon e^{ \gamma \Phi q_\epsilon } + M_2  \xi_1^{K }  
\geq M_1 e^{ \gamma \Phi q_\epsilon } . \end{equation}
Using \eqref{eq:q123}, we see that for 
$ \xi_1 \leq 1/\epsilon $ we need
$
G_\e e^{\gamma G_\e}+M_2\xi_1^K\geq M_1e^{\gamma G_\e} 
$.
This holds for 
\[ K = 0, \ \  M_2 = 2\gamma^{-1} e^{\gamma M_1-1} \]
since for $\gamma>0$ and $a \geq 0$,
$ a e^{\gamma a} - M_1 e^{ \gamma a } \geq -2 \gamma^{-1} e^{\gamma M_1 -1} $.

For $ \xi_1 \geq 1/\e $, we need to find $ K $ and $ M_2 $ for which
\begin{equation}
\label{eq:M1M2N}  \epsilon^{-1} \Phi e^{ \gamma \Phi q_\epsilon }
+ M_2 \xi_1^K \geq M_1 e^{ \gamma \Phi q_\epsilon } .
\end{equation}
Using $ a e^{ a b } +  M_1 e^{ M_1 b }  \geq M_1 e^{a b } $
with $ a := \epsilon^{-1} \Phi $ and 
$$ b :=   {\gamma \epsilon q_\epsilon} 
\leq \gamma (   {2} + \log (  {\epsilon }\xi_1) )\leq  { \gamma (2+\log \xi_1)}  ,$$ 
we obtain 
\eqref{eq:M1M2N} with $ M_2 = M_1 e^{   {2\gamma} M_1 } $ and 
$ K = \gamma M_1 $. Hence we obtain \eqref{eq:PhiHp} proving  
\eqref{eq:propHpG3}.
\end{proof} 

\subsection{Microlocal analytic hypoelliticity}
We will have bounds which are uniform in $ \epsilon $ but not in $ h$. 
{We start with the following
\begin{lemm}
\label{l:new}
Suppose that $ P $ is of the form \eqref{eq:defP} with real valued
principal symbol $ p $ and suppose that $ \Gamma \subset 
U \times \RR^n \setminus $ is an open cone, $ \Gamma \cap 
\mathbb S^{n-1} \Subset U \times \mathbb S^{n-1}$
and 
\begin{equation}\begin{gathered}
\label{eq:newG}  G \in S^1 ( \Gamma ; \RR) , \ \ 
| G | \leq C \log \langle \xi \rangle 
, \\
H_p G ( x , \xi) \geq c_0  \left(
 \langle \xi \rangle^m |\partial_\xi G ( x, \xi ) |^2 +
 \langle \xi \rangle^{m-2} |  \partial_x G (x, \xi)|^2\right) .\end{gathered}
\end{equation}
 Then for $ T_\Lambda $, $ H_\Lambda $, $ \Lambda = \Lambda_{\theta G } $ defined in
\eqref{e:LambdaDef} and \eqref{eq:HLa}, $ h $ and $ \theta $ sufficiently small, and $ u \in
H_\Lambda^{-N+ {m}} $,
\begin{equation}
\label{eq:new}
\begin{gathered}
\Im \langle h^m P u ,  u \rangle_{ H^{{-N}}_\Lambda } \ \geq \tfrac12 \theta
\langle  H_p G \, \langle \xi\rangle^{-N}T_\Lambda u ,\langle \xi\rangle^{-N} T_\Lambda u \rangle_{ 
L^2_\Lambda }   - M h \| 
 u \|^2_{ H^{\frac{m-1}2 -N  }_\Lambda } , 
\end{gathered}
\end{equation}
where $ M $ depends only on $ P $ and the semi-norms of $ G $ in $ S^1 $.
\end{lemm}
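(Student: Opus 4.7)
The plan is to transfer the estimate to the FBI side using Proposition \ref{p:6.2}, isolate the principal-symbol contribution, and exploit the quadratic positivity in \eqref{eq:newG} to absorb the higher-order perturbations in $\theta$. First I would apply Proposition \ref{p:6.2} with $\Lambda=\Lambda_{\theta G}$ to obtain $T_\Lambda h^m P = \Pi_\Lambda b_P \Pi_\Lambda T_\Lambda + \mathcal O(h^\infty)T_\Lambda$, where $b_P \sim \sum_{j\ge 0} h^j b_j$ with $b_j\in S^{m-j}$ and $b_0 = p|_\Lambda$. Since $\Pi_\Lambda T_\Lambda = T_\Lambda$ and the $H^{-N}_\Lambda$ inner product is realized by integration against $\langle\Re\alpha_\xi\rangle^{-2N} e^{-2H/h}$, taking imaginary parts gives
\[
\Im \langle h^m Pu, u\rangle_{H^{-N}_\Lambda} = \int_\Lambda \langle\Re\alpha_\xi\rangle^{-2N}\,\Im b_P\, |T_\Lambda u|^2\, e^{-2H/h}\,d\alpha + \mathcal O(h^\infty)\|u\|^2_{H^{-N}_\Lambda},
\]
reducing the problem to a pointwise study of $\Im b_P$.

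The heart of the argument is the Taylor expansion of $b_0(x,\xi;\theta) := p(x - i\theta G_\xi, \xi + i\theta G_x)$ in $\theta$. Since $p$ is real-valued and extends holomorphically in its arguments, the identity $\overline{b_0(\theta)} = b_0(-\theta)$ forces $\Im b_0$ to be an odd function of $\theta$, so
\[
\Im b_0 = \theta H_p G + \sum_{k\ge 1} \frac{(-1)^{k+1}\theta^{2k+1}}{(2k+1)!}L^{2k+1} p, \qquad L := G_\xi \partial_x - G_x \partial_\xi.
\]
Using $|G_\xi|\le C$ and $\langle\xi\rangle^{-1}|G_x|\le C$ (from $G\in S^1$), I would expand $L^{2k+1}p$ as a sum of monomials of the form $G_\xi^j G_x^{2k+1-j}\partial_x^j \partial_\xi^{2k+1-j} p$ plus remainders involving derivatives of $G$, and estimate each such term by extracting exactly two of the "small" factors: every contribution is pointwise bounded by $C_k\bigl(\langle\xi\rangle^m|\partial_\xi G|^2 + \langle\xi\rangle^{m-2}|\partial_x G|^2\bigr)$, which by \eqref{eq:newG} is at most $(C_k/c_0)H_p G$. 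Since $C_k$ grows only exponentially in $k$, the series converges for $\theta$ small (depending only on $c_0$ and the $S^1$ semi-norms of $G$) and yields $\Im b_0 \ge \tfrac12 \theta H_p G$.

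For the lower-order piece, $h\Im b_1 \in h S^{m-1}$ integrates to at most $Mh\|u\|^2_{H^{(m-1)/2 - N}_\Lambda}$, with $M$ depending only on $P$ and the $S^1$ seminorms of $G$; the remaining terms $h^j b_j$ with $j\ge 2$ are absorbed into this error after taking $h$ small. Summing the three contributions gives \eqref{eq:new}. The main obstacle I anticipate is the pointwise bookkeeping in the second step, namely verifying that every monomial of $L^{2k+1}p$ really does reduce to the quadratic expression $\langle\xi\rangle^m|\partial_\xi G|^2 + \langle\xi\rangle^{m-2}|\partial_x G|^2$ (with factorial control on $C_k$) so that it can be absorbed by $H_p G$; beyond that the argument is routine symbol calculus on top of Proposition \ref{p:6.2}.
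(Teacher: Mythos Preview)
Your strategy is the paper's: pass to the FBI side via Proposition~\ref{p:6.2}, isolate $\Im p|_\Lambda$, and absorb the higher-order $\theta$-terms using \eqref{eq:newG}. The one difference worth flagging is that the paper sidesteps the obstacle you anticipate. Rather than expanding $\Im b_0$ as a full odd series in $\theta$ and bounding each $L^{2k+1}p$ term by term, it uses the first-order Taylor expansion with integral remainder: writing $f(\theta)=p(x-i\theta G_\xi,\xi+i\theta G_x)$ with $G_\xi,G_x$ evaluated at the \emph{fixed} base point $(x,\xi)$, one has
\[
\Im f(\theta)=\theta H_pG+\theta^2\,\Im\!\int_0^1(1-s)f''(s\theta)\,ds,
\]
and $f''$ is a quadratic form in the fixed vectors $(G_\xi,G_x)$ with coefficients $\partial_x^\alpha\partial_\xi^\beta p\in S^{m-|\beta|}$ evaluated at a nearby complex point, so $|f''|\le C(\langle\xi\rangle^m|G_\xi|^2+\langle\xi\rangle^{m-2}|G_x|^2)$ immediately---no series, no factorial bookkeeping. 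This also clarifies your $L$: in the Taylor expansion of $f$ the relevant operator is the \emph{frozen-coefficient} directional derivative $-iG_\xi\!\cdot\!\nabla_y+iG_x\!\cdot\!\nabla_\eta$ acting on $p(y,\eta)$, not a vector field on $T^*\RR^n$, so there are in fact no ``remainders involving derivatives of $G$'' to track. A minor omission: reducing $\Im\langle\Pi_\Lambda\langle\xi\rangle^{-2N}\Pi_\Lambda b_P\Pi_\Lambda T_\Lambda u,T_\Lambda u\rangle$ to the multiplication operator by $\Im b_{P,N}$ also requires the Toeplitz calculus of \cite[Proposition~6.3]{gaz1}, not only Proposition~\ref{p:6.2}.
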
}
\begin{proof}
We use Proposition \ref{p:6.2} and~\cite[Proposition 6.3]{gaz1} to see that for any $K>0$,
\begin{equation}
\label{eq:l3} \begin{split}  \Im \langle h^m P u , u \rangle_{H^{-N}_\Lambda } 
& = {\Im}\langle {\langle \xi\rangle^{-2N}}T_\Lambda h^m P S_\Lambda T_\Lambda u , T_\Lambda u \rangle_{L^2_\Lambda } \\
&= {\Im}\langle{\Pi_\Lambda \langle \xi\rangle^{-2N}} \Pi_\Lambda 
h^m P S_\Lambda \Pi_\Lambda T_\Lambda u , T_\Lambda u \rangle_{L^2_\Lambda }
\\
& = \langle (\Im b_{P,N} ) T_\Lambda u , T_\Lambda u\rangle_{L^2_\Lambda} + \mathcal O ( h^\infty ) 
\| u \|_{ H_\Lambda^{-K} }\\
&  \geq  
\langle (\Im p|_\Lambda )\, {\langle \xi\rangle^{-N}} T_\Lambda u , {\langle \xi\rangle^{-N}} T_\Lambda u \rangle_{L^2_\Lambda } 
- M h  \| u \|_{ H_\Lambda^{\frac{m-1}2 - N} } . 
\end{split} \end{equation}
From \eqref{eq:p6.2} and \eqref{eq:newG} we obtain
\[ \begin{split} \Im p|_\Lambda & = \Im p ( x - i \theta \partial_\xi G ( x, \xi ) , 
\xi + i \theta \partial_x G ( x, \xi )) \\
& = 
\theta H_p G ( x, \xi ) + \theta^2 \mathcal O \left(  \langle \xi \rangle^m   
|\partial_\xi G ( x, \xi ) |^2 + \langle \xi \rangle^{m-2} |\partial_x
G ( x, \xi ) |^2 \right) \\
& \geq \tfrac12  \theta H_p G ( x, \xi ) , 
\end{split} \]
if $ \theta $ is small enough. 
\end{proof}

The next lemma allows us to use smoothness of $ u $ to obtain weaker
weighted estimates:
\begin{lemm}
\label{l:nowe}
Suppose $ U \subset \RR^n $ is an open set, 
\[ G \in S^1 ( T^* \RR^n ) ,  \ \ G \geq 0 , \ \ 
\supp G \subset K \times \RR^n , \ \ K \Subset U , 
\]
and  $ T_\Lambda $, $ H_\Lambda $, $ \Lambda = \Lambda_{\theta G } $ are defined in
\eqref{e:LambdaDef} and \eqref{eq:HLa}. Then, there exists $a>0$ such that for every $\chi,\tilde{\chi}\in S^1$ with $\tilde{\chi}\equiv 1$ in a conic neighborhood of $\supp \chi$ and every $K,N>0$, there exists $c, C>0$ such that for all $u\in H^{-N}(\mathbb{R}^n)$,
\begin{equation}
\label{eq:nowe}  \| \langle \xi \rangle ^{K} e^{ - a G/h } \chi T_\Lambda u \|_{ L^2_\Lambda } \leq 
C( \| \langle \xi\rangle^{K}\tilde{\chi} T u \|_{L^2(T^*\mathbb{R}^n)}+ e^{-c/h}\|\langle \xi\rangle^{-N}Tu\|_{L^2(T^*\mathbb{R}^n)})  . \end{equation}
In particular, if $\chi\equiv 1$ on $\supp G$, then
\begin{equation}
\label{eq:nowe2}
\begin{split}
& \|(\langle \xi\rangle^Ke^{-a/h}\chi +\langle \xi\rangle^{-N}(1-\chi))T_\Lambda u\|_{L^2_\Lambda} \\
& \ \ \ \ \ \ \leq C( \| \langle \xi\rangle^{N}\tilde{\chi} T u \|_{L^2(T^*\mathbb{R}^n)}+ e^{-C/h}\|\langle \xi\rangle^{-N}Tu\|_{L^2(T^*\mathbb{R}^n)}).
\end{split}
\end{equation}
\end{lemm}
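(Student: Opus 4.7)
The strategy uses the left-inverse identity $u = STu$ (Proposition 2.2 of \cite{gaz1}) to write $T_\Lambda u = T_\Lambda S\,Tu = T_\Lambda S \tilde\chi Tu + T_\Lambda S (1-\tilde\chi) Tu$, so that \eqref{eq:nowe} reduces to bounding the two operators $\chi e^{-aG/h} T_\Lambda S \tilde\chi$ and $\chi e^{-aG/h} T_\Lambda S (1-\tilde\chi)$ from weighted $L^2(T^*\RR^n)$ into $L^2_\Lambda$. For both pieces I would analyze the Schwartz kernel $K(\alpha,\beta)$ of $T_\Lambda S$ (with $\alpha \in \Lambda$ parametrized by $(x,\xi) \in \RR^{2n}$ and $\beta = (y,\eta) \in T^*\RR^n$) by completion of squares and stationary phase in the $y$-integration, exactly as in the proofs of Lemma \ref{l:TS} and Proposition \ref{p:weig2def}. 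This should yield
\[
|K(\alpha,\beta)| \leq C h^{-M}\exp\Bigl(\tfrac{1}{h}\bigl[H(\alpha) - c\psi(\alpha,\beta)\bigr]\Bigr),\quad \psi(\alpha,\beta)\geq c\Bigl(\tfrac{|\xi-\eta|^2}{\langle\xi\rangle+\langle\eta\rangle}+(\langle\xi\rangle+\langle\eta\rangle)|x-y|^2\Bigr)\geq 0,
\]
and the factor $e^{H(\alpha)/h}$ is precisely cancelled by the weight $e^{-H(\alpha)/h}$ built into the $L^2_\Lambda$ norm.

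On the support of $\chi(\alpha)\tilde\chi(\beta)$ the Gaussian concentration forces $\langle\alpha_\xi\rangle \sim \langle\beta_\xi\rangle$, so the weight $\langle\alpha_\xi\rangle^K\langle\beta_\xi\rangle^{-K}$ is bounded on the effective support, $e^{-aG(\alpha)/h} \leq 1$ since $G \geq 0$, and Schur's test applied to $e^{-c\psi/h}$ gives the required uniform $L^2 \to L^2_\Lambda$ bound by $\|\langle\xi\rangle^K\tilde\chi Tu\|_{L^2}$. On the support of $\chi(\alpha)(1-\tilde\chi)(\beta)$ the hypothesis that $\tilde\chi\equiv 1$ on a conic neighborhood of $\supp\chi$ forces either $|x-y|\geq\delta$ or $|\alpha_\xi/\langle\alpha_\xi\rangle - \beta_\xi/\langle\beta_\xi\rangle|\geq\delta$, and the elementary computation of \eqref{e:estimateMeNow} then gives $\psi(\alpha,\beta)\geq c\delta^2(\langle\alpha_\xi\rangle+\langle\beta_\xi\rangle)$; so $e^{-c\psi/h}$ dominates any polynomial factor in $\langle\beta_\xi\rangle$ absorbed from pairing with $\langle\xi\rangle^{-N} Tu$, and produces the prefactor $e^{-c/h}$ in front of $\|\langle\xi\rangle^{-N} Tu\|_{L^2}$.

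Finally, \eqref{eq:nowe2} follows by treating the two weights separately. Since $\chi \equiv 1$ on $\supp G$, the function $G$ vanishes identically in a neighborhood of $\supp(1-\chi)$, so also $G_x = G_\xi = 0$ and $H = 0$ there; hence $\Lambda$ coincides locally with $T^*\RR^n$ and $T_\Lambda u = Tu$ on $\supp(1-\chi)$, giving
\[
\|\langle\xi\rangle^{-N}(1-\chi) T_\Lambda u\|_{L^2_\Lambda} = \|\langle\xi\rangle^{-N}(1-\chi) Tu\|_{L^2} \leq \|\langle\xi\rangle^{-N} Tu\|_{L^2},
\]
which is already contained in the right-hand side of \eqref{eq:nowe2}. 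The $\chi$-part is controlled by the same kernel estimate as above, now used with $\langle\xi\rangle^K e^{-a/h}$ in place of $\langle\xi\rangle^K e^{-aG/h}$; the uniform constant $e^{-a/h}$ combines with the $e^{-c\psi/h}$-decay and, for $a$ small, both the first target term $\|\langle\xi\rangle^N\tilde\chi Tu\|_{L^2}$ (after replacing $K$ by $N$) and the $e^{-C/h}$ prefactor on the far piece. \textbf{Main obstacle:} the key technical step is the kernel estimate for $T_\Lambda S$ in the asymmetric (one endpoint on $\Lambda$, one on $T^*\RR^n$) setting, and in particular verifying that the $y$-critical point yields the exponent $H(\alpha) - c\psi(\alpha,\beta)$ with $\psi$ admitting the positive lower bound above; this is a direct adaptation of the computations in Lemma \ref{l:TS} and Proposition \ref{p:weig2def}, with the conic separation argument of \eqref{e:estimateMeNow} providing the off-diagonal decay needed for the far piece.
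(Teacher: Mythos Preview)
Your overall decomposition $T_\Lambda u = T_\Lambda S\tilde\chi Tu + T_\Lambda S(1-\tilde\chi)Tu$ and the plan to apply Schur's test to the resulting kernels is exactly the route the paper takes. The treatment of the far piece via conic separation and \eqref{e:estimateMeNow} is also fine, as is the observation that $(1-\chi)T_\Lambda u = (1-\chi)Tu$ when $\chi\equiv 1$ on $\supp G$.

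The gap is in the near piece. Your claimed kernel bound
\[
|K(\alpha,\beta)| \leq C h^{-M}\exp\Bigl(\tfrac{1}{h}\bigl[H(\alpha) - c\psi(\alpha,\beta)\bigr]\Bigr)
\]
is false, and consequently the step ``$e^{-aG/h}\leq 1$ since $G\geq 0$, and Schur's test applied to $e^{-c\psi/h}$ gives the bound'' does not work. When you compute $\Im\Psi(\alpha,\beta)$ with $\alpha=(x-i\theta G_\xi,\xi+i\theta G_x)\in\Lambda$ and $\beta=(y,\eta)$ real, the complex shift produces, in addition to the Gaussian $c\psi$, a term $-\theta\,\xi\!\cdot\!G_\xi$ from the linear part of $\Psi$ and \emph{negative} quadratic terms $-C\theta^2\bigl(\langle\xi\rangle|G_\xi|^2+\langle\xi\rangle^{-1}|G_x|^2\bigr)$ from squaring the complex shifts. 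After cancelling $H=\theta\xi\!\cdot\!G_\xi-\theta G$ against the $L^2_\Lambda$ weight, what survives in the exponent is not $-c\psi$ but rather
\[
-c\psi \;+\; \theta G \;+\; C\theta^2\bigl(\langle\xi\rangle|G_\xi|^2+\langle\xi\rangle^{-1}|G_x|^2\bigr),
\]
and the last two terms are \emph{positive} and of size $\langle\xi\rangle$ (remember $G\in S^1$, not log-bounded, in the hypotheses of the lemma). So the kernel of $e^{-H/h}T_\Lambda S$ is exponentially large on the diagonal and Schur's test fails outright. Equivalently, your bound would give $T_\Lambda S:L^2\to L^2_\Lambda$ bounded, contradicting Proposition~\ref{p:weig2def}, which says the correct input space is $L^2(e^{2G/h})$.

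The factor $e^{-aG/h}$ is therefore not a harmless $\leq 1$ that can be discarded; it is precisely what kills these bad terms. One needs $a$ \emph{large enough} (not small) so that $(a-\theta)G$ dominates both $\theta G$ and, via the elementary inequality $\langle\xi\rangle|G_\xi|^2+\langle\xi\rangle^{-1}|G_x|^2\leq C G$ (valid because $G\geq 0$, $G\in S^1$; cf.\ \eqref{eq:Phineq}), the quadratic errors. This is where the hypothesis $G\geq 0$ is really used, and this is the computation the paper carries out to arrive at $\Im\varphi\geq \tfrac{a}{2}G + c\psi$, after which Schur's test does apply.
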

\begin{proof}

First, observe that by~\cite[Lemma 4.5]{gaz1}, for any $\delta>0$,
\[ T_{\Lambda}S = K_\delta + O_{N,\delta}(e^{-c_\delta /h})_{\langle \xi\rangle^{N}L^2(T^*\mathbb{R}^n)\to \langle \xi\rangle^{-N}L^2_\Lambda}, 
\]
and $K_\delta $ has kernel, $ K_\delta (\alpha,\beta)$, given by 
$$
\begin{gathered}
h^{-n}e^{\frac{i}{h}\Psi(\alpha,\beta)}k(\alpha,\beta)\psi(\delta^{-1}|\Re \alpha_x- \beta_x|) )\psi(\delta^{-1}\min(\langle\Re \alpha_\xi \rangle,\langle \beta_\xi\rangle)^{-1}|\Re \alpha_\xi-\beta_\xi|), 
\end{gathered}
$$
where $ (\alpha, \beta ) \in \Lambda \times T^*\mathbb{R}^n  $ and $\Psi$ is as in~\eqref{eq:Psi}, and $\psi\in C_c^\infty(\mathbb{R})$ is identically 1 near 0. Therefore, we need only consider $K_\delta(\alpha,\beta)$.

To do this, let $\tilde{\chi}\in S^0$ be identically 1 on a conic neighborhood of $\supp\chi$. Then, for $\delta>0$ small enough,
$$
\chi(\Re \alpha)K_\delta(\alpha,\beta)(1-\tilde{\chi})(\beta)\equiv 0.
$$
Therefore, 
$$
\chi e^{-a G/h}\langle \xi\rangle^K T_\Lambda S(1-\tilde{\chi})=O_{N}(e^{-c/h})_{\langle \xi\rangle^{N}L^2(T^*\mathbb{R}^n)\to \langle \xi\rangle^{-N}L^2_\Lambda}.
$$

For the mapping properties 
\[ \chi e^{-aG/h}T_\Lambda S\tilde{\chi} : \langle \xi\rangle^{-K} L^2(T^*\mathbb{R}^n)\to \langle \xi\rangle^{-K}L^2_\Lambda, \]
we consider the operator
$$
\chi e^{-aG/h}e^{-H/h}\langle \xi\rangle^K T_\Lambda S\tilde{\chi}\langle \xi\rangle^{-K}:L^2(T^*\mathbb{R}^n)\to L^2(\Lambda; dxd\xi).
$$
Modulo negligible terms, the kernel of this operator is given by 
$$
h^{-n}e^{\frac{i}{h}(\varphi((x,\xi),(y,\eta)))}\tilde{k}((x,\xi),(y,\eta))
$$
where $\tilde{k}\in S^0$ has
\begin{equation}
\label{e:suppPropK}
\supp \tilde{k} \subset\{ |\xi-\eta|\leq C\delta \langle \xi\rangle \}\cap \{|x-y|\leq C\delta\}.
\end{equation}
and 
$$
\varphi= iH(x,\xi)+ia\theta G(x,\xi)+\Psi((x-i\theta G_\xi,\xi+i\theta G_x(x,\xi)),(y,\eta)),
$$
with $H(x,\xi)=\theta \langle \xi,G_\xi(x,\xi)\rangle -\theta G(x,\xi).$ Using~\eqref{e:suppPropK}, we have 
\begin{align*}
\Im\varphi &= aG +\theta \xi\cdot G_\xi-\theta G+\frac{\langle \eta\rangle\langle\xi\rangle}{2(\langle \eta\rangle+\langle \xi\rangle)}\left( (x-y)^2-(\theta G_\xi)^2 \right) + \frac{ (\xi-\eta)^2-(\theta G_{\xi})^2}{2(\langle \eta\rangle+\langle \xi\rangle)}\\
&\qquad  + \theta \xi\cdot G_\xi +O(\theta(|x-y| |G_x|+\langle \xi\rangle^{-1}|\xi-\eta||G_\xi|))\\&\qquad \qquad+O(\theta^2(\langle \xi\rangle^{-1}|G_x|^2+\langle \xi\rangle |G_\xi|^2))\\
&\geq (a-\theta)G-C\theta^2(\langle \xi\rangle^{-1}(G_x)^2+\langle \xi\rangle|G_\xi|^2) +c\langle \xi\rangle (x-y)^2+c\langle \xi\rangle^{-1}(\xi-\eta)^2.
\end{align*}
In particular, taking $a$ large enough and using that $G\geq 0$, $G\in S^1$, (see the argument for \eqref{eq:Phineq}), we have
$$
\Im \varphi\geq \frac{a}{2}G(x,\xi)+c\langle \xi\rangle (x-y)^2+c\langle \xi\rangle^{-1}(\xi-\eta)^2.
$$
Therefore,  applying the Schur test for $L^2$ boundedness completes the proof that 
$$
\chi \langle \xi\rangle ^Ke^{-aG/h}T_\Lambda S\langle \xi\rangle^{-K}=O(1):L^2(T^*\mathbb{R}^n)\to L^2_\Lambda
$$
and the lemma follows.
\end{proof}

With these two lemmas in place we can prove the main result:
\begin{proof}[Proof of Theorem \ref{t:2}]
By multiplying $ u $ by a $ \CIc $-function  which is $ 1 $ in a neighbourhood of $ x_0 $, we can assume that $ u \in H^{-N+m}$, 
for some $ N $, is  compactly supported in $U$ and $\rho_0:=(x_0,\xi_0)\notin \WF(u).$ By Proposition \ref{p:FBI2H}, 
there exists $\tilde{\chi}\in S^0$ with $\tilde{\chi} \equiv 1$ in an open conic neighborhood, $\Gamma$, of $\rho_0$ such that for any $K>0$,
\begin{equation}
\label{e:smoothPart}
\|\langle \xi\rangle^K\tilde{ \chi} Tu\|_{L^2}\leq C_K.
\end{equation}
Also,  since $u\in H^{-N+m}$, 
\begin{equation}
\label{e:roughPart}
\|\langle \xi\rangle^{-N+m} Tu\|_{L^2}\leq C.
\end{equation}
Let $\Gamma_1\Subset \Gamma$ be an open conic neighborhood of $\rho_0$ and $\chi\in S^1$ with $\chi\equiv 1$ on $\Gamma_1$ and $\supp \chi \subset \Gamma$.

We choose $ \theta $ small enough so that
\eqref{e:LambdaDef} and \eqref{eq:l3} hold. We then fix $0< h\leq 1 $ small enough
so that \eqref{eq:l3} holds. From now we neglect the dependence on $ h $
which is considered to be a fixed parameter. We choose for
$ G = G_\e $ constructed in Lemma \ref{l:G} and supported in $\Gamma_1$. We recall that the estimates
depend only on the $ S^1 $ seminorms of $ G$ and these are uniform in 
$ \epsilon $.
We now claim that
$$
u\in H_{\Lambda_\e}^{-N+m},\qquad \Lambda_\e:=\Lambda_{\theta G_\e}.
$$
In fact, we can use \eqref{eq:nowe2} together with~\eqref{e:smoothPart} and~\eqref{e:roughPart}, observing that 
$ \exp ( a G_\e / h ) = \mathcal O_\epsilon ( \langle \xi \rangle^{
C a/ ( h \e ) }) $ and taking $K=C a/(h\e)$.

Next, note that $Pu\in H^{-N}$ is supported in $U$ and $\rho_0\notin \WF_a(Pu)$ .Propositions \ref{p:awf} and \ref{p:weig2def}
 (see \eqref{eq:cawf} and \eqref{eq:L22H} respectively) 
then show that for $ G_\e $ satisfying the assumptions of Lemma \ref{l:new} and $ \theta $ sufficiently small
$\| P u \|_{H^{-N}_{\Lambda_\epsilon } } \leq C_0 $,  
where $ C_0 $ depends only on $ P u $ and $ S^1$-seminorms of 
$ \theta G_\e $.

We now apply \eqref{eq:new} to obtain with $\Lambda_\e$ as above,
\begin{equation}
\label{eq:delta}
\tfrac12 \| u \|^2_{H^{-N}_{\Lambda_\e}}  + 
2  C_0^2 \geq \langle ( \theta H_p G_\epsilon - M \langle
\xi \rangle^{m-1} ) \langle \xi\rangle^{-N-m}  T_{\Lambda_\e} u,  \langle \xi\rangle^{-N}T_{\Lambda_\e} u \rangle_{ L^2_{\Lambda_\e}}  , 
\end{equation}
Let $ a $ be given by Lemma \ref{l:nowe} (so that \eqref{eq:nowe} 
holds). Then by \eqref{eq:propHpG3}, there exist $ M_2 $ and $ K$ such that
\[  \theta H_p G_\epsilon + M_2 \langle \xi \rangle^{2K} e^{ -2 a G_\epsilon /h } \geq ( M+ 1 ) \langle \xi \rangle^{m-1} . \]

From~\eqref{eq:nowe} we have
\begin{equation}
\label{e:intermediate}
\begin{aligned}
&\|M_2\chi \langle \xi\rangle^{K}e^{-aG_\e/h}\langle \xi\rangle^{-N}T_\Lambda u\|^2_{L^2_{\Lambda_\e}}\\
&\leq C(\|\langle \xi\rangle^{K-N}\tilde{\chi}Tu\|^2_{L^2(T^*\mathbb{R}^n)}+\|\langle \xi\rangle^{-N}Tu\|^2_{L^2(T^*\mathbb{R}^n)})\leq C^2_1
\end{aligned}
\end{equation}
Therefore, adding~\eqref{e:intermediate} to~\eqref{eq:delta}, and using that $\supp G_\e\subset \chi \equiv 1$, we have
\begin{equation}
\label{eq:delta2}
\begin{aligned}
&\tfrac12 \| u \|^2_{H^{-N}_{\Lambda_\e}}  + C_1^2+
2  C_0^2\\
& \geq \langle \chi^2  \langle
\xi \rangle^{m-1}  \langle \xi\rangle^{-N}  T_{\Lambda_\e} u,  \langle \xi\rangle^{-N}T_{\Lambda_\e} u \rangle_{ L^2_{\Lambda_\e}} \\
&\qquad\qquad\qquad-\langle M(1-\chi^2)\langle \xi\rangle^{m-1}\langle \xi\rangle^{-N}T_{\Lambda_\e} u,\langle \xi\rangle^{-N}T_{\Lambda_\e} u\rangle_{L^2_{\Lambda_\e}}\\& 
\geq \langle \langle
\xi \rangle^{m-1}  \langle \xi\rangle^{-N}  T_{\Lambda_\e} u,  \langle \xi\rangle^{-N}T_{\Lambda_\e} u \rangle_{ L^2_{\Lambda_\e}} -(M+1)\|u\|_{H^{-N + \frac{m-1}{2}}},
 \end{aligned} 
\end{equation}
where in the last line we use that $\chi\equiv 1$ on $\supp G_\e$.

Using $m\geq 1$ and rearranging, this yields
\[  \| u \|_{H^{-N}_{\Lambda_\e} }^2 \leq 2C_1^2+4C_0^2+2(M+1)\|u\|_{H^{-N + \frac{m-1}{2}}}. \]
where $ C_1, C_0$ and $ M $ are constants {\em independent of $\e$}. 

Since $ {\Lambda_\e} \cap \{ | \xi| < 1/\e \} = \Lambda_0 \cap 
\{ |\xi | < 1/\e \} $ where $G_0:=\Phi|\xi|$, we have that $ H_\epsilon |_{ | \xi| < 1/\e } 
= H_0 |_{ | \xi| < 1/\e } $,  where $ H_\epsilon
= \theta \xi \partial_\xi G_\epsilon + \theta G $ is the corresponding weight. Therefore, the
monotone convergence
theorem implies that
$ u \in H_{\Lambda_0}  $.  Since
 $ \Phi ( x_0, t\xi_0 ) = 1 $, $ t \gg 1 $, Proposition \ref{p:awf}
shows that $ ( x_0 , \xi_0 ) \notin \WFa ( u ) $.
\end{proof}

\medskip\noindent\textbf{Acknowledgements.}
Partial support for M.Z. by the National Science Foundation grant DMS-1500852 and for J.G. by the National Science Foundation grant DMS-1900434 is also  
gratefully acknowledged.

\end{document}